\documentclass{article}

\usepackage{amssymb, amsmath, amsthm, tikz, graphicx,lmodern,indentfirst,enumerate}

\usepackage[titletoc,page]{appendix}
\usepackage[margin=1.4in]{geometry}

\newtheorem{thm}{Theorem}[section]
\newtheorem{lem}[thm]{Lemma}
\newtheorem{prop}[thm]{Proposition}

\theoremstyle{definition}
\newtheorem{defn}[thm]{Definition}

\theoremstyle{remark}
\newtheorem{rem}[thm]{Remark}

\theoremstyle{definition}

\theoremstyle{definition}
\newtheorem{question}[thm]{Open Problem}

\theoremstyle{remark}
\newtheorem{example}[thm]{Example}

\theoremstyle{definition}

\numberwithin{equation}{section}

\newcommand*\circled[1]{\tikz[baseline=(char.base)]{
            \node[shape=circle,draw,inner sep=0pt,minimum size=5mm] (char) {#1};}}

\newcommand{\rootsG}[8]{\circled{#1}
            \begin{tabular}{cccccc}
             &&&#2&& \\
             #3&#4&#5&#6&#7&#8
             \end{tabular}}

\makeatletter

\newcommand{\Rmnum}[1]{\expandafter\@slowromancap\romannumeral #1@}
\makeatother

\bibliographystyle{plain}
\begin{document}

\title{Separability and complete reducibility of subgroups of the Weyl group of a simple algebraic group of type $E_7$}
\author{Tomohiro Uchiyama\\
Department of Mathematics, University of Auckland, \\
Private Bag 92019, Auckland 1142, New Zealand\\
\texttt{email:tuch540@aucklanduni.ac.nz}}
\date{}
\maketitle

\begin{abstract}
Let $G$ be a connected reductive algebraic group defined over an algebraically closed field $k$. The aim of this paper is to present a method to find triples $(G,M,H)$ with the following three properties. 
Property~1: $G$ is simple and $k$ has characteristic $2$. 
Property~2: $H$ and $M$ are closed reductive subgroups of $G$ such that $H<M<G$, and $(G, M)$ is a reductive pair.  
Property~3: $H$ is $G$-completely reducible, but not $M$-completely reducible.  
We exhibit our method by presenting a new example of such a triple in $G=E_7$. Then we consider a rationality problem and a problem concerning conjugacy classes as important applications of our construction. 
\end{abstract}

\noindent \textbf{Keywords:} algebraic groups, separable subgroups, complete reducibility 

\section{Introduction}

Let $G$ be a connected reductive algebraic group defined over an algebraically closed field $k$ of characteristic $p$. In ~\cite[Sec.~3]{Serre-building}, J.P. Serre defined that a closed subgroup $H$ of $G$ is \emph{$G$-completely reducible} ($G$-cr for short) if whenever $H$ is contained in a parabolic subgroup $P$ of $G$, $H$ is contained in a Levi subgroup $L$ of $P$. This is a faithful generalization of the notion of semisimplicity in representation theory since if $G=GL_n(k)$, a subgroup $H$ of $G$ is $G$-cr if and only if $H$ acts complete reducibly on $k^n$~\cite[Ex.~3.2.2(a)]{Serre-building}. It is known that if a closed subgroup $H$ of $G$ is $G$-cr, then $H$ is reductive~\cite[Prop.~4.1]{Serre-building}. Moreover, if $p=0$, the converse holds~\cite[Prop.~4.2]{Serre-building}. Therefore the notion of $G$-complete reducibility is not interesting if $p=0$. In this paper, we assume that $p>0$. 

Completely reducible subgroups of connected reductive algebraic groups have been much studied \cite{Liebeck-Seitz}, \cite{Liebeck-Testerman}, \cite{Serre-building}. Recently, studies of complete reducibility via Geometric Invariant Theory (GIT for short) have been fruitful \cite{Ben2}, \cite{Ben3}, \cite{Ben1}. In this paper, we see another application of GIT to  complete reducibility (Proposition~\ref{H is not M-cr}). 

Here is the main problem we consider. Let $H$ and $M$ be closed reductive subgroups of $G$ such that $H\leq M\leq G$. It is natural to ask
whether $H$ being $M$-cr implies that $H$ is $G$-cr and vice versa. It is not difficult to find a counterexample for the forward direction. For example, take $H=M=PGL_2(k)$ and $G=SL_3(k)$ where $p=2$ and $H$ sits inside $G$ via the adjoint representation. Another such example is \cite[Ex.~3.45]{Ben2}. However, it is hard to get a counterexample for the reverse direction, and it necessarily involves a small $p$. In \cite[Sec.~7]{Ben1}, Bate et al.~presented the only known counterexample for the reverse direction where $p=2$, $H\cong S_3$, $M\cong A_1 A_1$, and $G=G_2$, which we call ``the $G_2$ example''. The aim of this paper is to prove the following.

\begin{thm}\label{G-cr but not M-cr}
Let $G$ be a simple algebraic group of type $E_7$ defined over $k$ of characteristic $p=2$. Then there exists a connected reductive subgroup $M$ of type $A_7$ of $G$ and a reductive subgroup $H\cong D_{14}$ (the dihedral group of order $14$) of $M$ such that $(G,M)$ is a reductive pair and $H$ is $G$-cr but not $M$-cr. 
\end{thm}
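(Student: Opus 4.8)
The plan is to follow the blueprint of the ``$G_2$ example'', replacing the pair $(C_3\rtimes C_2,\,G_2)$ by $(C_7\rtimes C_2,\,E_7)$ with an intermediate subgroup $M$ of type $A_7$, and to exploit the fact that in characteristic $2$ the involution in $D_{14}$ is forced to be unipotent. First I would fix the embedding $M\le G$, realizing $M$ as the subsystem subgroup of type $A_7$ arising from the extended Dynkin diagram of $E_7$ (deleting the appropriate node of the affine diagram), and fix a maximal torus $T$ with $T\le M\le G$ together with compatible root data. With this embedding I would verify that $(G,M)$ is a reductive pair by decomposing the adjoint module: as a module for $M$ (whose simply connected cover is $\mathrm{SL}_8$, acting on the natural module $V_8$) one has $\mathrm{Lie}(G)=\mathrm{Lie}(M)\oplus\Lambda^4 V_8$, and the dimension bookkeeping $133=63+\binom{8}{4}=63+70$ confirms the summand. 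The point requiring care is that this complement must survive in characteristic $2$, where $2\mid 8$ produces the usual pathologies for $\mathfrak{sl}_8$ and its centre; I would check directly that $\Lambda^4 V_8$ is an $M$-module complement to $\mathrm{Lie}(M)$ in $\mathrm{Lie}(G)$ even when $p=2$, so that $(G,M)$ is genuinely a reductive pair.

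Next I would construct $H\cong D_{14}=\langle a,b\mid a^7=b^2=1,\ bab^{-1}=a^{-1}\rangle$ inside $M$. I would take $a$ to be a fixed semisimple element of order $7$ in $T$, so that its eigenvalues on $V_8$ are $7$th roots of unity and, since $7$ is prime to $p$, the cyclic group $\langle a\rangle$ is linearly reductive. I would then take $b$ to be an explicit element of $M$ inverting $a$. Because $p=2$, every element of order $2$ is unipotent, so $b$ is a \emph{unipotent} involution, and it is precisely this unipotent element that makes complete reducibility subtle. I would confirm that $\langle a,b\rangle\cong D_{14}$, that it is closed, and that, being finite, it is reductive.

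The two remaining tasks are the substance of the proof. To show $H$ is not $M$-cr I would produce a cocharacter $\lambda$ of $M$ with $H\le P_\lambda(M)$ and show, via the GIT criterion of Proposition~\ref{H is not M-cr}, that no element of $R_u(P_\lambda(M))$ conjugates $H$ into the Levi subgroup $L_\lambda(M)$; concretely the obstruction measures how the unipotent involution $b$ sits ``off'' the Levi, and the non-conjugacy is the delicate computation. To show $H$ is $G$-cr I would argue that in the larger group $E_7$ this obstruction disappears, either by showing $H$ is $G$-irreducible (contained in no proper parabolic of $G$), or by a centralizer reduction using that $\langle a\rangle$ is linearly reductive, so that $G$-complete reducibility of $H$ is governed by the reductive group $C_G(\langle a\rangle)$ and the behaviour of $b$ within it.

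I expect the \textbf{main obstacle} to be the non-$M$-cr step: one must pin down the correct destabilizing cocharacter in $A_7$ and rule out conjugacy into its Levi by $R_u(P_\lambda(M))$, a calculation whose success hinges on the failure of separability of $H$ in $G$ when $p=2$. This is consistent with, and indeed dictated by, the reductive-pair/separability philosophy: if $(G,M)$ is a reductive pair and $H\le M$ is $G$-cr and separable in $G$, then $H$ must be $M$-cr, so producing the desired counterexample forces $H$ to be non-separable in $G$ and hence forces $p$ to be small. The $G$-cr step is conceptually cleaner but still requires controlling the parabolic subgroups of the $133$-dimensional group $E_7$, so a structural argument (irreducibility or centralizer reduction) rather than brute force will be essential.
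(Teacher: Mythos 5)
Your high-level frame (reductive pair via $\mathrm{Lie}\,G=\mathrm{Lie}\,M\oplus\Lambda^4V_8$, the GIT/$R_u(P_\lambda(M))$-conjugacy criterion for non-$M$-crness, a reduction for the $G$-cr step) matches the paper's, and your verification of the reductive pair is actually more explicit than anything in the paper. But the construction of $H$ has a genuine gap, and it is located exactly where you defer to ``the delicate computation.'' You place the order-$7$ generator $a$ inside the maximal torus $T$. In the paper the order-$7$ generator is an element $q_2\in N_G(T)\setminus T$ projecting to an order-$7$ element of the Weyl group; together with the involution $q_1$ it generates a group $K\cong D_{14}$ that permutes the $42$ roots of $R_u(P)$ in orbits of sizes $7,7,14,7,7$. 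That orbit structure is the entire engine of the example: in characteristic $2$ the commutator relations produce a quadratic constraint $a^2+b^2+c^2+d^2=0$, i.e.\ the linear relation $a+b+c+d=0$, on $C_{R_u(P)}(K)$ which is invisible in $\mathfrak{c}_{\mathrm{Lie}\,R_u(P)}(K)$ --- this is the non-separable action. A toral $\langle a\rangle$ acts on each root subgroup by a scalar, has trivial orbit structure on the roots, and is separable, so your version of the mechanism collapses; the burden of non-separability would fall entirely on the single involution $b$, and you give no reason to believe it can carry it.

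Second, and relatedly, you build $H$ ``statically'' and attribute the subtlety to the involution being unipotent. That mislocates the difficulty: the paper's untwisted group $K$ (which contains the same unipotent involution) is both $G$-cr and $M$-cr. The non-$M$-cr subgroup is $H=v(a)Kv(a)^{-1}$ for a carefully chosen $v(a)\in R_u(P)$ whose tangent direction lies in $\mathfrak{c}_{\mathrm{Lie}\,R_u(P)}(K)$ but \emph{not} in $\mathrm{Lie}\,C_{R_u(P)}(K)$; the proof that no element of $R_u(P_\lambda(M))$ conjugates $H$ back onto $K=c_\lambda(H)$ is precisely the statement that $mv(a)$ would have to lie in $C_{R_u(P)}(K)$, contradicting the explicit description of that centralizer. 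Your proposal has no analogue of this twist, so even granting your $D_{14}$, there is no identified obstruction to its being $M$-cr. The $G$-cr step, by contrast, is essentially what you suggest and what the paper does: $H$ is $G$-conjugate to $K\le L_\lambda\cong GL_7$, and the permutation representation of $D_{14}$ on $k^7$ splits as a trivial summand plus three irreducible $2$-dimensional summands, so $K$ is $L_\lambda$-cr and hence $G$-cr.
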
 

Our work is motivated by \cite{Ben1}. We recall a few relevant definitions and results here. We denote the Lie algebra of $G$ by $\rm{Lie}\,G=\mathfrak{g}$. From now on, by a subgroup of $G$, we always mean a closed subgroup of $G$.

\begin{defn}
Let $H$ be a subgroup of $G$ acting on $G$ by inner automorphisms. Let $H$ act on $\mathfrak{g}$ by the corresponding adjoint action. Then $H$ is called \emph{separable} if ${\rm Lie}\,C_G(H) = \mathfrak{c}_{\mathfrak{g}}(H)$.
\end{defn}

\noindent Recall that we always have ${\rm Lie}\,C_G(H) \subseteq c_{\mathfrak{g}}(H)$. In \cite{Ben1}, Bate et al.~investigated the relationship between $G$-complete reducibility and separability, and showed the following  \cite[Thm.~1.2, Thm.~1.4]{Ben1}.

\begin{prop}\label{p-good-separability}
Suppose that $p$ is very good for $G$. Then any subgroup of $G$ is separable in $G$.
\end{prop}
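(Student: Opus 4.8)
The plan is to recast separability as a smoothness statement and then exploit a nondegenerate invariant form on $\mathfrak{g}$ supplied by the very good hypothesis. First I would reduce to finitely many generators: since $G$ is Noetherian, one can choose $h_1,\dots,h_n\in H$ with $C_G(H)=C_G(h_1,\dots,h_n)$ and $\mathfrak{c}_{\mathfrak{g}}(H)=\mathfrak{c}_{\mathfrak{g}}(h_1,\dots,h_n)$, so it suffices to treat the simultaneous conjugation action of $G$ on $G^n$ at the point $x=(h_1,\dots,h_n)$. Here $C_G(H)$ is exactly the stabiliser $G_x$, the orbit map $\mu\colon G\to G\cdot x$ has differential $d\mu_e(X)=\big((1-\operatorname{Ad}h_1)X,\dots,(1-\operatorname{Ad}h_n)X\big)$ after the usual right-translation identifications, and $\ker d\mu_e=\mathfrak{c}_{\mathfrak{g}}(H)$. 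Since one always has $\operatorname{Lie}C_G(H)\subseteq\mathfrak{c}_{\mathfrak{g}}(H)$ while $\dim\operatorname{Lie}C_G(H)=\dim C_G(H)=\dim G-\dim(G\cdot x)$, separability is equivalent to the orbit map $\mu$ being \emph{separable}, i.e. to $\operatorname{im}d\mu_e=T_x(G\cdot x)$, and it suffices to establish the single inequality $\dim\mathfrak{c}_{\mathfrak{g}}(H)\le\dim C_G(H)$.

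The key input is that when $p$ is very good for $G$ the Lie algebra $\mathfrak{g}$ admits a nondegenerate $G$-invariant symmetric bilinear form $\kappa$; this is essentially the content of the very good condition, which rules out precisely the primes for which the natural trace form on $\mathfrak{g}$ degenerates. Using $\kappa$ I would identify $\mathfrak{g}\cong\mathfrak{g}^{\ast}$ as $G$-modules and extend $\kappa$ to a nondegenerate invariant form on $\mathfrak{g}^n$. Because each $\operatorname{Ad}h_i$ is an isometry of $\kappa$, the operators $1-\operatorname{Ad}h_i$ and $1-\operatorname{Ad}h_i^{-1}$ are mutually adjoint, whence $\operatorname{im}(1-\operatorname{Ad}h_i)^{\perp}=\ker(1-\operatorname{Ad}h_i^{-1})=\mathfrak{c}_{\mathfrak{g}}(h_i)$. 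This orthogonality is what makes the image of $d\mu_e$ and the tangent space to the orbit match dimensionally.

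The heart of the matter is a Richardson-type argument: the self-duality $\mathfrak{g}\cong\mathfrak{g}^{\ast}$ forces $\operatorname{im}d\mu_e$ to be a subspace whose orthogonal complement is cut out by the condition $\sum_i(1-\operatorname{Ad}h_i^{-1})y_i=0$, and comparing this with the annihilator of $T_x(G\cdot x)$ yields $\operatorname{im}d\mu_e=T_x(G\cdot x)$, hence the desired bound. I expect this final step to be the main obstacle: proving that the differential actually exhausts the orbit tangent space — equivalently, that the scheme-theoretic centraliser is reduced — is exactly where nondegeneracy of $\kappa$ is indispensable, and it is the step that breaks down for primes that are not very good. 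That breakdown is in turn what makes a counterexample such as Theorem~\ref{G-cr but not M-cr} possible at $p=2$. A useful technical reduction along the way is to split, via the Jordan decomposition of the $h_i$, into the separate treatment of semisimple and unipotent generators, and to pass to the simply connected cover so that $\mathfrak{g}$ is genuinely self-dual factor by factor.
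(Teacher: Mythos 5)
First, a point of reference: the paper does not prove this proposition at all --- it is quoted verbatim from \cite[Thm.~1.2]{Ben1} (Bate--Martin--R\"ohrle--Tange), so your attempt can only be compared with that source. Your framework agrees with theirs and is correct as far as it goes: reducing to a topologically finitely generated $H=\langle h_1,\dots,h_n\rangle$, passing to the simultaneous conjugation orbit map $\mu\colon G\to G\cdot x$ at $x=(h_1,\dots,h_n)$, identifying $\ker d\mu_e=\mathfrak{c}_{\mathfrak{g}}(H)$ and $C_G(H)=G_x$, and observing that separability of $H$ in $G$ is equivalent to $\operatorname{im}d\mu_e=T_x(G\cdot x)$, i.e.\ to the single inequality $\dim\mathfrak{c}_{\mathfrak{g}}(H)\le\dim C_G(H)$, is all sound; and the nondegenerate $G$-invariant symmetric form on $\mathfrak{g}$ afforded by the very good hypothesis is indeed the decisive input.

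The gap is exactly where you suspect it, and as written the key step is circular. Computing $(\operatorname{im}d\mu_e)^{\perp}$ in $\mathfrak{g}^n$ via the adjointness of $1-\operatorname{Ad}h_i$ and $1-\operatorname{Ad}h_i^{-1}$ only tells you that $\dim\operatorname{im}d\mu_e=\dim\mathfrak{g}-\dim\mathfrak{c}_{\mathfrak{g}}(H)$, which you already knew from rank--nullity applied to $\ker d\mu_e$; and there is no independent description of the annihilator of $T_x(G\cdot x)$ to compare it with, since the only a priori facts about $T_x(G\cdot x)$ are that it contains $\operatorname{im}d\mu_e$ and that its dimension equals $\dim G-\dim C_G(H)$ --- the very quantity at issue. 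The missing idea is Richardson's reductive-pair argument, which is how \cite{Ben1} closes the loop: embed $G$ in some $G'=GL(V)$ in which every subgroup is automatically separable (the centralizer of any subset of $GL(V)$ is the unit group of an associative subalgebra of $\operatorname{End}(V)$, hence smooth with the expected Lie algebra; cf.\ \cite[Ex.~3.28]{Ben2}), and use the very good hypothesis to make $(G',G)$ a reductive pair, $\mathfrak{gl}(V)=\mathfrak{g}\oplus\mathfrak{m}$ as $G$-modules. Since each $\operatorname{Ad}h_i$ preserves both summands, the two orbit maps satisfy $\operatorname{im}d\mu'_e\cap\mathfrak{g}^n=\operatorname{im}d\mu_e$, and separability upstairs then gives $T_x(G\cdot x)\subseteq T_x(G'\cdot x)\cap\mathfrak{g}^n=\operatorname{im}d\mu'_e\cap\mathfrak{g}^n=\operatorname{im}d\mu_e$, which is the desired equality. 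Your form $\kappa$ is precisely what produces the $G$-stable complement $\mathfrak{m}$ (and deals with the isogeny issues you raise, since very good $p$ does not divide the order of the fundamental group), but it must be fed into this transversality argument across the ambient group rather than into a direct annihilator computation on $\mathfrak{g}^n$ alone. The Jordan-decomposition reduction at the end is harmless but does no work.
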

\begin{prop}\label{reductive-pair}
Suppose that $(G, M)$ is a reductive pair. Let $H$ be a subgroup of $M$ such that $H$ is a separable subgroup of $G$. If $H$ is $G$-cr, then it is also $M$-cr. 
\end{prop}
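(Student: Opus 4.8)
The plan is to recast both complete reducibility assertions in the language of Geometric Invariant Theory and then exploit the reductive pair structure through an infinitesimal (tangent space) computation, in the spirit of the GIT approach to complete reducibility. Fix a tuple $\mathbf{h}=(h_1,\dots,h_n)$ of elements of $H$ that generates $H$ as an algebraic group, and view it as a point of $M^n\subseteq G^n$, where $G$ acts on $G^n$ and $M$ on $M^n$ by simultaneous conjugation in each coordinate. By the geometric characterization of complete reducibility of Bate, Martin and R\"ohrle, $H$ is $G$-cr if and only if the orbit $G\cdot\mathbf{h}$ is closed in $G^n$, and $H$ is $M$-cr if and only if $M\cdot\mathbf{h}$ is closed in $M^n$. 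So, granting that $G\cdot\mathbf{h}$ is closed, the whole problem reduces to proving that $M\cdot\mathbf{h}$ is closed in $M^n$.

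Next I would extract the infinitesimal content of the two hypotheses. Writing $\mathfrak{g}=\mathfrak{m}\oplus\mathfrak{n}$ as a direct sum of $M$-modules --- possible exactly because $(G,M)$ is a reductive pair --- both summands are stable under $\operatorname{Ad}(H)$ since $H\leq M$. The tangent space $T_{\mathbf{h}}(G\cdot\mathbf{h})$ always contains the image of $\beta^G\colon\mathfrak{g}\to\mathfrak{g}^n$, $X\mapsto((\operatorname{Ad}(h_i)-\operatorname{Id})X)_i$, whose kernel is $\mathfrak{c}_{\mathfrak{g}}(H)$; likewise for the analogous map $\beta^M\colon\mathfrak{m}\to\mathfrak{m}^n$. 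Because each $h_i$ lies in $M$, the map $\beta^G$ respects the splitting $\mathfrak{g}^n=\mathfrak{m}^n\oplus\mathfrak{n}^n$, and this forces $\operatorname{im}\beta^G\cap\mathfrak{m}^n=\operatorname{im}\beta^M$. This is where separability is indispensable: since $H$ is a separable subgroup of $G$, the orbit map $G\to G\cdot\mathbf{h}$ is separable, so $T_{\mathbf{h}}(G\cdot\mathbf{h})$ equals $\operatorname{im}\beta^G$ exactly rather than merely containing it. Combining the two facts gives $T_{\mathbf{h}}(G\cdot\mathbf{h})\cap\mathfrak{m}^n=\operatorname{im}\beta^M=T_{\mathbf{h}}(M\cdot\mathbf{h})$, and a dimension count then shows that $M\cdot\mathbf{h}$ is open in the closed set $Z:=(G\cdot\mathbf{h})\cap M^n$. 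Since separability is invariant under $G$-conjugacy and the decomposition $\mathfrak{g}=\mathfrak{m}\oplus\mathfrak{n}$ is $M$-stable, the same argument applies at any point of $Z$ whose associated subgroup is $G$-conjugate to $H$.

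Finally I would turn openness into closedness by the Hilbert--Mumford--Kempf criterion for the reductive group $M$ acting on the affine variety $M^n$. If $M\cdot\mathbf{h}$ were not closed, there would be a cocharacter $\mu$ of $M$ with $\mathbf{h}':=\lim_{t\to0}\mu(t)\cdot\mathbf{h}$ lying in the boundary, so that $\dim M\cdot\mathbf{h}'<\dim M\cdot\mathbf{h}$. Computing the same limit inside $G^n$ and using that $G\cdot\mathbf{h}$ is closed shows $\mathbf{h}'=g\cdot\mathbf{h}$ for some $g\in G$; hence $\mathbf{h}'\in Z$ and the subgroup generated by $\mathbf{h}'$ equals $gHg^{-1}$, which is still separable in $G$. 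By the preceding paragraph both $M\cdot\mathbf{h}$ and $M\cdot\mathbf{h}'$ are then open in $Z$; since $\mathbf{h}'\in\overline{M\cdot\mathbf{h}}$, the open orbit $M\cdot\mathbf{h}'$ must meet $M\cdot\mathbf{h}$, forcing $M\cdot\mathbf{h}'=M\cdot\mathbf{h}$ and contradicting the strict drop in dimension. Thus $M\cdot\mathbf{h}$ is closed and $H$ is $M$-cr.

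I expect the tangent space transversality to be the heart of the matter. The reductive pair hypothesis is precisely what makes $\operatorname{im}\beta^G$ split compatibly with $\mathfrak{g}^n=\mathfrak{m}^n\oplus\mathfrak{n}^n$, while separability is precisely what pins $T_{\mathbf{h}}(G\cdot\mathbf{h})$ down to $\operatorname{im}\beta^G$; drop either hypothesis and the intersection with $\mathfrak{m}^n$ may acquire spurious directions, so that $M\cdot\mathbf{h}$ need not be open in $Z$ and the argument collapses --- consistent with the fact that such implications genuinely fail without separability. The remaining bookkeeping, namely that separability and the splitting persist for the $G$-conjugate $gHg^{-1}$ appearing in the Hilbert--Mumford step, is routine but should be verified to keep that step legitimate.
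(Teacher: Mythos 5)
Your proposal is correct, and it is essentially the standard proof of this result: the paper itself states Proposition~\ref{reductive-pair} without proof, quoting it from Bate--Martin--R\"ohrle \cite[Thm.~1.4]{Ben1}, whose argument is exactly your combination of the GIT criterion ($H$ is $G$-cr if and only if the orbit of a generating tuple is closed) with Richardson's tangent-space argument, in which the reductive-pair splitting $\mathfrak{g}=\mathfrak{m}\oplus\mathfrak{n}$ gives $\operatorname{im}\beta^G\cap\mathfrak{m}^n=\operatorname{im}\beta^M$ and separability pins $T_{\mathbf{h}}(G\cdot\mathbf{h})$ down to $\operatorname{im}\beta^G$, so that the $M$-orbits in $(G\cdot\mathbf{h})\cap M^n$ are open in it and hence closed. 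Your Hilbert--Mumford--Kempf endgame, including the check that separability transfers to the $G$-conjugate $gHg^{-1}$ at the limit point, is a legitimate (and correctly executed) way to convert that openness into closedness of $M\cdot\mathbf{h}$.
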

\noindent Recall that a pair of reductive groups $G$ and $M$ is called a \emph{reductive pair}
if ${\rm Lie}\,M$ is an $M$-module direct summand of $\mathfrak{g}$.
This is automatically satisfied if $p=0$.  
Propositions~\ref{p-good-separability} and~\ref{reductive-pair} imply that the subgroup $H$ in Theorem~\ref{G-cr but not M-cr} must be non-separable, which is possible for small $p$ only.

Now, we introduce the key notion of \emph{separable action}, which is a slight generalization of the notion of a separable subgroup. 
\begin{defn}
Let $H$ and $N$ be subgroups of $G$ where $H$ acts on $N$ by group automorphisms. The action of $H$ is called \emph{separable} in $N$ if ${\rm Lie}\,C_N(H) =  \mathfrak{c}_{{\rm Lie}\,N}(H)$. Note that the condition means that the fixed points of $H$ acting on $N$, taken with their natural scheme structure, are smooth. 
\end{defn}
\noindent Here is a brief sketch of our method. {\bf Note that in our construction, $p$ needs to be $2$.}
\begin{enumerate}
\item Pick a parabolic subgroup $P$ of $G$ with a Levi subgroup $L$ of $P$. Find a subgroup $K$ of $L$ such that $K$ acts non-separably on the unipotent radical $R_u(P)$ of $P$. In our case, $K$ is generated by elements corresponding to certain reflections in the Weyl group of $G$.
\item Conjugate $K$ by a suitable element $v$ of $R_u(P)$, and set $H=vKv^{-1}$. Then choose a connected reductive subgroup $M$ of $G$ such that $H$ is not $M$-cr. Use a recent result from GIT (Proposition~\ref{GIT}) to show that $H$ is not $M$-cr. Note that $K$ is $M$-cr in our case. 
\item Prove that $H$ is $G$-cr. 
\end{enumerate}
\begin{rem}
It can be shown using \cite[Thm.~13.4.2]{Springer} that $K$ in Step~1 is a non-separable subgroup of $G$. 
\end{rem}

First of all, for Step~1, $p$ cannot be very good for $G$ by Proposition~\ref{p-good-separability} and \ref{reductive-pair}. It is known that $2$ and $3$ are bad for $E_7$. We explain the reason why we choose $p=2$, not $p=3$ (Remark~\ref{2 is important}). Remember that the non-separable action on $R_u(P)$ was the key ingredient for the $G_2$ example to work. Since $K$ is isomorphic to a subgroup of the Weyl group of $G$, we are able to turn a problem of non-separability into a purely combinatorial problem involving the root system of $G$ (Section~3.1). 
Regarding Step~2, we explain the reason of our choice of $v$ and $M$ explicitly (Remarks~\ref{choice of v},~\ref{E7M}). Our use of  Proposition~\ref{GIT} gives an improved way for checking $G$-complete reducibility (Remark~\ref{reason}).
Finally, Step~3 is easy.

In the $G_2$ and $E_7$ examples, the $G$-cr and non-$M$-cr subgroups $H$ are finite. The following is the only known example of a triple $(G,M,H)$ with positive dimensional $H$ such that $H$ is $G$-cr but not $M$-cr. It is obtained by modifying \cite[Ex.~3.45]{Ben2}.
\begin{example}
Let $p=2$, $m\geq 4$ be even, and $(G,M)=(GL_{2m}(k),Sp_{2m}(k))$. Let $H$ be a copy of $Sp_{m}(k)$ diagonally embedded in $Sp_m(k)\times Sp_m(k)$. Then $H$ is not $M$-cr by the argument in \cite[Ex.~3.45]{Ben2}. But $H$ is $G$-cr since $H$ is $GL_m(k)\times GL_m(k)$-cr by \cite[Lem.~2.12]{Ben2}. Also note that any subgroup of $GL(k)$ is separable in $GL(k)$ (cf.~\cite[Ex.~3.28]{Ben2}), so $(G,M)$ is not a reductive pair by Proposition~\ref{reductive-pair}. 
\end{example}

In view of this, it is natural to ask: 
\begin{question}
Is there a triple $H<M<G$ of connected reductive algebraic groups such that $(G,M)$ is a reductive pair, $H$ is non-separable in $G$, and $H$ is $G$-cr but not $M$-cr?
\end{question}

Beyond its intrinsic interest, our $E_7$ example has some important consequences and applications. For example, in Section 6, we consider a rationality problem concerning complete reducibility. We need a definition first to explain our result there. 
\begin{defn}
Let $k_0$ be a subfield of an algebraically closed field $k$. Let $H$ be a $k_0$-defined closed subgroup of a $k_0$-defined reductive algebraic group $G$. Then $H$ is called \emph{$G$-cr over $k_0$} if whenever $H$ is contained in a $k_0$-defined parabolic subgroup $P$ of $G$, it is contained in some $k_0$-defined Levi subgroup of $P$. 
\end{defn}
Note that if $k_0$ is algebraically closed then $G$-cr over $k_0$ means $G$-cr in the usual sense. Here is the main result of Section 6. 
\begin{thm}\label{rationality}
Let $k_0$ be a nonperfect field of charecteristic $p=2$, and let $G$ be a $k_0$-defined split simple algebraic group of type $E_7$. Then there exists a $k_0$-defined subgroup $H$ of $G$ such that $H$ is $G$-cr over $k$, but not $G$-cr over $k_0$. 
\end{thm}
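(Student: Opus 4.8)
The plan is to reuse the very triple produced in the proof of Theorem~\ref{G-cr but not M-cr} and to exploit the non-separability of $K$ together with the imperfection of $k_0$. Recall from our method sketch that we have a $k_0$-defined parabolic subgroup $P$ of $G$ with a $k_0$-defined Levi subgroup $L$ (both standard with respect to a maximal $k_0$-split torus, hence $k_0$-defined since $G$ is $k_0$-split), a subgroup $K\le L$ generated by representatives of reflections in the Weyl group, so that $K$ is $k_0$-defined, and an element $v\in R_u(P)$ with $H=vKv^{-1}$. Since $v\in R_u(P)$ and $K\le L\le P$ we have $H\le P$, and over $k$ the Levi subgroup $vLv^{-1}$ of $P$ contains $H$, consistent with $H$ being $G$-cr. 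The strategy is to show that $P$ itself witnesses the failure of $G$-complete reducibility over $k_0$.

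First I would record the elementary fact that, for $c\in R_u(P)$, one has $c^{-1}Kc\le L$ if and only if $c\in C_{R_u(P)}(K)$; this follows from the semidirect product decomposition $P=R_u(P)\rtimes L$ and the normality of $R_u(P)$ in $P$. Consequently $u\in R_u(P)$ satisfies $u^{-1}Hu\le L$ precisely when $u\in v\,C_{R_u(P)}(K)$. Since $G$ is $k_0$-split and $P$ is $k_0$-defined, every $k_0$-defined Levi subgroup of $P$ has the form $uLu^{-1}$ for some $u\in R_u(P)(k_0)$. Hence $H$ lies in a $k_0$-defined Levi subgroup of $P$ if and only if the coset $v\,C_{R_u(P)}(K)$ meets $R_u(P)(k_0)$.

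Next I would choose $v$ so that this coset avoids $R_u(P)(k_0)$ while $H$ itself remains $k_0$-defined. Because $k_0$ is nonperfect of characteristic $2$, there is $a\in k_0\setminus k_0^2$; the idea is to reparametrise the element $v$ of Theorem~\ref{G-cr but not M-cr} as a product of root group elements whose arguments involve $\sqrt a\notin k_0$, while the scheme-theoretic centraliser $C_{R_u(P)}(K)$ is non-smooth with trivial reduced part. This non-smoothness is exactly the non-separability of the action of $K$ on $R_u(P)$ built into our construction (cf. the Remark following the method sketch, and Propositions~\ref{p-good-separability} and~\ref{reductive-pair}). The inseparability of the orbit map $R_u(P)\to R_u(P)\cdot K$ then forces $H$, viewed as a $k_0$-point of this orbit, not to lift to a $k_0$-point of $R_u(P)$: concretely, $H$ is $k_0$-defined, its defining equations involving only $a=(\sqrt a)^2\in k_0$, yet $v\notin G(k_0)$ and $v\,C_{R_u(P)}(K)$ has no $k_0$-rational point. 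With the previous paragraph this exhibits $P$ as a $k_0$-defined parabolic containing $H$ that admits no $k_0$-defined Levi containing $H$, so $H$ is not $G$-cr over $k_0$; and $H$ is $G$-cr over $k$ directly by Theorem~\ref{G-cr but not M-cr}.

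The main obstacle is the explicit construction and verification of Step two: one must exhibit a reparametrisation of $v$ in terms of $\sqrt a$ inside the root groups of $R_u(P)$ in type $E_7$ with $p=2$ for which (i) the resulting $H=vKv^{-1}$ is genuinely $k_0$-defined, and (ii) the reduced part of $C_{R_u(P)}(K)$ is trivial, so that no correction $c$ can pull $vc$ back into $G(k_0)$. Both (i) and (ii) reduce to the same combinatorial root-system computation in characteristic $2$ that underlies the non-separability in Section~3.1, and the delicate point is to check that the inseparability has degree exactly $2$, so that a single square root both suffices to make $H$ rational and is genuinely unavoidable over $k_0$; everything else is then formal.
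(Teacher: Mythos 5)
Your overall reduction coincides with the paper's: the choice of $\tilde a=\sqrt a\notin k_0$ with $\tilde a^2\in k_0$, the observation that $H=v(\tilde a)Kv(\tilde a)^{-1}$ is $k_0$-defined because its generators involve only $\tilde a^2$, and the equivalence ``$H$ lies in a $k_0$-defined Levi subgroup of $P_\lambda$ if and only if the coset $v(\tilde a)\,C_{R_u(P_\lambda)}(K)$ meets $R_u(P_\lambda)(k_0)$'' (the paper reaches the same coset via the projection $c_\lambda$ together with \cite{Borel}, V.20.4--20.5). The gap is in the one step that carries all the content, namely ruling out $k_0$-points of that coset. You do this by asserting that the reduced part of $C_{R_u(P_\lambda)}(K)$ is trivial, so that no correction can move $v(\tilde a)$ into $G(k_0)$. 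That assertion is false: the one-parameter subgroup $\bigl\{\prod_{i=36}^{42}\epsilon_i(t)\mid t\in k\bigr\}$ lies in $Z(R_u(P_\lambda))$ and is centralized by $q_1$ and $q_2$ (which merely permute the weight-$2$ roots $36,\dots,42$, the signs dying since $p=2$), and Proposition~\ref{positive proposition} exhibits a positive-dimensional centralizer with three free ``block'' parameters $a,b,c$ besides the central directions. Non-separability of the action of $K$ means $\mathrm{Lie}\,C_{R_u(P_\lambda)}(K)\subsetneq\mathfrak{c}_{\mathrm{Lie}(R_u(P_\lambda))}(K)$; it does not mean the reduced centralizer is trivial, and ``inseparability of the orbit map'' by itself does not yield the contradiction.

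What actually closes the argument is the explicit shape of the centralizer. By Proposition~\ref{positive proposition}, every $z\in C_{R_u(P_\lambda)}(K)$ has block coordinates $(a,\,b,\,c,\,a+b+c,\,\ast)$, so any $w=v(\tilde a)z$ has coordinates $(\tilde a+a,\,b,\,c,\,a+b+c,\,\ast)$; if $w$ were a $k_0$-point then $b$, $c$, and $a+b+c$ would lie in $k_0$, hence $a\in k_0$ (characteristic $2$), hence $\tilde a=(\tilde a+a)+a\in k_0$, a contradiction. The linear relation $d=a+b+c$ --- equivalently, the failure of the tangent direction $\sum_{i\in O_1}e_i$ to exponentiate into the group centralizer --- is precisely where non-separability enters, and it is not captured by your ``the inseparability has degree exactly $2$'' heuristic. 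Without Proposition~\ref{positive proposition} (or an equivalent explicit computation of $C_{R_u(P_\lambda)}(K)$), your proof does not go through.
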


As another application of the $E_7$ example, we consider a problem concerning conjugacy classes. Given $n\in {\mathbb N}$, we let $G$ act on $G^n$ by simultaneous conjugation:
\begin{equation*}
g\cdot(g_1, g_2, \ldots, g_n) = (g g_1 g^{-1}, g g_2 g^{-1}, \ldots, g g_n g^{-1}). 
\end{equation*}
 In \cite{Slodowy}, Slodowy proved the following fundamental result applying Richardson's tangent space argument,~\cite[Sec.~3]{Richardson2},~\cite[Lem.~3.1]{Richardson3}. 
\begin{prop}\label{conjugacy}
Let $M$ be a reductive subgroup of a reductive algebraic group $G$ defined over $k$. Let $n\in {\mathbb N}$, let $(m_1, \ldots, m_n)\in M^n$ and let $H$ be the subgroup of $M$ generated by $m_1, \ldots, m_n$. Suppose that $(G, M)$ is a reductive pair and that $H$ is separable in $G$. Then the intersection $G\cdot (m_1, \ldots, m_n)\cap M^n$ is a finite union of $M$-conjugacy classes. 
\end{prop}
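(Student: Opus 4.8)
The plan is to adapt Richardson's tangent-space argument, exactly as Slodowy did. Write $\mathbf{m} = (m_1,\dots,m_n)$, $\mathfrak{g} = \mathrm{Lie}\,G$ and $\mathfrak{m} = \mathrm{Lie}\,M$, and regard $M^n$ as a closed $M$-stable subvariety of $G^n$. Since $\mathbf{m}\in M^n$, the intersection $\mathcal{X} := G\cdot\mathbf{m}\cap M^n$ is a locally closed, $M$-stable subvariety of $G^n$, and it is a disjoint union of $M$-conjugacy classes, each itself locally closed. The key reduction is that it suffices to prove that every $M$-orbit meeting $\mathcal{X}$ is \emph{open} in $\mathcal{X}$: once this is known, each such orbit is also closed in $\mathcal{X}$ (its complement being the union of the remaining open orbits), hence is a union of connected components of $\mathcal{X}$; as $\mathcal{X}$ is a variety it has only finitely many connected components, which yields the desired finiteness. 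To verify openness of the orbit through an arbitrary point $\mathbf{m}' = g\cdot\mathbf{m}\in\mathcal{X}$, note that its components generate the group $gHg^{-1}$, which again lies in $M$ and is again separable in $G$ (both properties being invariant under $G$-conjugacy); so it is enough to carry out the computation at the original base point $\mathbf{m}$.

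To prove openness I would compare tangent spaces. Let $\pi_G\colon G\to G^n$ and $\pi_M\colon M\to M^n$ denote the orbit maps $g\mapsto g\cdot\mathbf{m}$. Identifying $T_{\mathbf{m}}G^n$ with $\mathfrak{g}^n$, their differentials at the identity have images
\[
\mathrm{im}(d\pi_G)_e = \{\,((\mathrm{Id}-\mathrm{Ad}\,m_i)x)_{1\le i\le n} : x\in\mathfrak{g}\,\},\qquad \mathrm{im}(d\pi_M)_e = \{\,((\mathrm{Id}-\mathrm{Ad}\,m_i)y)_{1\le i\le n} : y\in\mathfrak{m}\,\}.
\]
The crucial input is the \emph{separability of $H$ in $G$}. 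Since $\ker(d\pi_G)_e = \mathfrak{c}_{\mathfrak{g}}(H)$ while $\dim T_{\mathbf{m}}(G\cdot\mathbf{m}) = \dim\mathfrak{g}-\dim\mathrm{Lie}\,C_G(H)$, the hypothesis $\mathrm{Lie}\,C_G(H) = \mathfrak{c}_{\mathfrak{g}}(H)$ forces $\dim\mathrm{im}(d\pi_G)_e = \dim T_{\mathbf{m}}(G\cdot\mathbf{m})$; as the inclusion $\mathrm{im}(d\pi_G)_e\subseteq T_{\mathbf{m}}(G\cdot\mathbf{m})$ always holds, this gives
\[
T_{\mathbf{m}}(G\cdot\mathbf{m}) = \mathrm{im}(d\pi_G)_e.
\]
Now the reductive-pair hypothesis provides an $M$-module decomposition $\mathfrak{g} = \mathfrak{m}\oplus\mathfrak{n}$; since each $\mathrm{Ad}\,m_i$ preserves both summands, $(\mathrm{Id}-\mathrm{Ad}\,m_i)$ maps $\mathfrak{m}$ into $\mathfrak{m}$ and $\mathfrak{n}$ into $\mathfrak{n}$. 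Writing $x = x_{\mathfrak{m}}+x_{\mathfrak{n}}$ and demanding that $((\mathrm{Id}-\mathrm{Ad}\,m_i)x)_i$ lie in $\mathfrak{m}^n$ kills the $\mathfrak{n}$-contribution, whence
\[
T_{\mathbf{m}}(G\cdot\mathbf{m})\cap\mathfrak{m}^n = \mathrm{im}(d\pi_M)_e\subseteq T_{\mathbf{m}}(M\cdot\mathbf{m}).
\]

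Finally I would run the dimension count. Using the general inclusion $T_{\mathbf{m}}\mathcal{X}\subseteq T_{\mathbf{m}}(G\cdot\mathbf{m})\cap T_{\mathbf{m}}(M^n) = T_{\mathbf{m}}(G\cdot\mathbf{m})\cap\mathfrak{m}^n$ together with the identity just obtained,
\[
\dim_{\mathbf{m}}\mathcal{X}\le \dim T_{\mathbf{m}}\mathcal{X}\le \dim\big(T_{\mathbf{m}}(G\cdot\mathbf{m})\cap\mathfrak{m}^n\big) = \dim\,\mathrm{im}(d\pi_M)_e\le \dim T_{\mathbf{m}}(M\cdot\mathbf{m}) = \dim M\cdot\mathbf{m}.
\]
Since $M\cdot\mathbf{m}\subseteq\mathcal{X}$ forces the reverse inequality $\dim M\cdot\mathbf{m}\le\dim_{\mathbf{m}}\mathcal{X}$, all terms coincide; the smooth orbit $M\cdot\mathbf{m}$ therefore has the same local dimension as $\mathcal{X}$ at $\mathbf{m}$ and so is open in $\mathcal{X}$, which completes the argument via the reduction above. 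I expect the main obstacle to be the separability step: it is precisely separability of $H$ in $G$ that guarantees $T_{\mathbf{m}}(G\cdot\mathbf{m}) = \mathrm{im}(d\pi_G)_e$, and without it the computable image $\mathrm{im}(d\pi_G)_e$ is a proper subspace of the true tangent space $T_{\mathbf{m}}(G\cdot\mathbf{m})$, so intersecting with $\mathfrak{m}^n$ need not recover $T_{\mathbf{m}}(M\cdot\mathbf{m})$ and the final bound collapses. The reductive-pair hypothesis plays the complementary role of making the intersection with $\mathfrak{m}^n$ computable.
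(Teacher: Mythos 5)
The paper gives no proof of this proposition: it simply cites Slodowy and notes that his proof applies Richardson's tangent-space argument, and your proposal is a correct and faithful reconstruction of exactly that argument (separability gives $T_{\mathbf{m}}(G\cdot\mathbf{m})=\mathrm{im}(d\pi_G)_e$, the reductive pair lets you intersect with $\mathfrak{m}^n$, and the dimension count forces each $M$-orbit to be open, hence closed, in $G\cdot\mathbf{m}\cap M^n$). The only point worth polishing is the final step: equal local dimension alone is not quite enough for openness, but your chain of inequalities also yields $\dim_{\mathbf{m}}\mathcal{X}=\dim T_{\mathbf{m}}\mathcal{X}$, i.e.\ smoothness of $\mathcal{X}$ at $\mathbf{m}$, from which openness of the smooth suborbit of the same dimension does follow.
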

Proposition~\ref{conjugacy} has many consequences. See \cite{Ben2}, \cite{Slodowy}, and \cite[Sec.~3]{Vinberg} for example. In \cite[Ex.~7.15]{Ben1}, Bate et al.~found a counterexample for $G=G_2$ showing that Proposition~\ref{conjugacy} fails without the separability hypothesis. In Section 7, we present a new counterexample to Proposition~\ref{conjugacy} without the separability hypothesis. Here is the main result of Section 7.

\begin{thm}\label{conjugacy-counterexample}
Let $G$ be a simple algebraic group of type $E_7$ defined over an algebraically closed $k$ of characteristic $p=2$. Let $M$ be the connected reductive subsystem subgroup of type $A_7$. Then there exists $n\in \mathbb{N}$ and a tuple $\mathbf{m}\in M^n$ such that $G\cdot \bold{m} \cap M^n$ is an infinite union of $M$-conjugacy classes. Note that $(G,M)$ is a reductive pair in this case.
\end{thm}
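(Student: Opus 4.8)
The plan is to reuse the triple $(G,M,H)$ constructed for Theorem~\ref{G-cr but not M-cr} and to show that its non-separability forces the conclusion of Proposition~\ref{conjugacy} to fail, in the same spirit as the $G_2$ counterexample of \cite[Ex.~7.15]{Ben1}. Let $H\cong D_{14}$ be as in Theorem~\ref{G-cr but not M-cr}, and fix a generating tuple $\mathbf{m}=(m_1,\dots,m_n)\in M^n$ of $H$; for instance one may take $n=2$ with $m_1$ of order $7$ and $m_2$ an involution. Since $(G,M)$ is a reductive pair and $H$ is $G$-cr but not $M$-cr, Propositions~\ref{p-good-separability} and~\ref{reductive-pair} show that $H$ is non-separable in $G$, so the separability hypothesis of Proposition~\ref{conjugacy} genuinely fails for $\mathbf{m}$. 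The whole point of the theorem is to upgrade this failure of hypothesis into a failure of conclusion.

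First I would record the two orbit-theoretic inputs supplied by the orbit criterion for complete reducibility \cite{Ben1}. Because $H$ is $G$-cr, the orbit $G\cdot\mathbf{m}$ is closed in $G^n$; hence $G\cdot\mathbf{m}\cap M^n$ is a closed, $M$-stable subvariety of $M^n$ containing $\mathbf{m}$. Because $H$ is not $M$-cr, the orbit $M\cdot\mathbf{m}$ is \emph{not} closed in $M^n$, so $G\cdot\mathbf{m}\cap M^n$ already properly contains the single orbit $M\cdot\mathbf{m}$. It remains to show that this intersection meets infinitely many $M$-orbits.

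The crux is a dimension count, and this is where I expect the real work to lie. Writing $H=vKv^{-1}$ with $K\le L$ and $v\in R_u(P)$ as in the construction, let $\mathbf{k}$ be the corresponding generating tuple of $K$, so that $\mathbf{m}=v\cdot\mathbf{k}$. Set $U_0=\{u\in R_u(P): u\cdot\mathbf{k}\in M^n\}$ and $W=\{u\cdot\mathbf{k}: u\in U_0\}$. Every element of $W$ lies in the single $G$-orbit $G\cdot\mathbf{k}=G\cdot\mathbf{m}$, so $W\subseteq G\cdot\mathbf{m}\cap M^n$, and $v\in U_0$ gives $\mathbf{m}\in W$. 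The tangent space to $U_0$ at $v$ is governed by the fixed-point scheme of $K$ acting on $R_u(P)$, and it is precisely the non-smoothness of this scheme -- that is, the non-separable action that drives the whole paper and forces $p=2$ -- which makes $\dim U_0$, and hence $\dim W$, strictly larger than in the separable case. I would then bound from above the dimension of any single $M$-orbit meeting $W$, using $\dim M\cdot(u\cdot\mathbf{k})=\dim M-\dim C_M(uKu^{-1})$ together with the fact that each $uKu^{-1}$ is a copy of $D_{14}$ whose centralizer in $M$ has controlled dimension, and show that $\dim W$ strictly exceeds this common bound. Since an irreducible variety of dimension $d$ cannot be a finite union of orbits each of dimension strictly less than $d$, infinitely many $M$-orbits must meet $W$, and therefore $G\cdot\mathbf{m}\cap M^n$ is an infinite union of $M$-conjugacy classes.

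The step I expect to be the main obstacle is the explicit dimension estimate just described: computing, or at least bounding below, $\dim W$ through the non-smooth $K$-fixed-point locus on $R_u(P)$, and matching it against the centralizer dimensions $\dim C_M(uKu^{-1})$, all within the concrete root-system combinatorics of $E_7$ in characteristic $2$. This is exactly the place where the non-separable action on $R_u(P)$, rather than any soft GIT input, does the decisive work, mirroring its role in Theorem~\ref{G-cr but not M-cr} and in the $G_2$ example.
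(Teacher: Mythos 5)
Your proposal correctly identifies the two soft facts (closedness of $G\cdot\mathbf{m}$ from $H$ being $G$-cr, non-closedness of $M\cdot\mathbf{m}$ from $H$ not being $M$-cr), but the decisive step --- the dimension count showing $\dim W$ exceeds the dimension of every $M$-orbit meeting $W$ --- is only announced, not carried out, and there is a concrete reason to doubt it works for your choice of tuple. If $\mathbf{m}$ is merely a generating tuple of $H=v(a)Kv(a)^{-1}$, then the natural infinite family $\{v(x)\cdot\mathbf{k}\}_{x\in k^*}$ supplied by the non-separable direction $T_1(C_1)$ collapses into a \emph{single} $M$-orbit: the cocharacter $\lambda$ satisfies $\langle i,\lambda\rangle=2$ for every $i\in O_1$, so $\lambda(t)v(x)\lambda(t)^{-1}=v(t^2x)$, and since $\lambda(k^*)\subseteq Z(L_\lambda)^{\circ}$ centralizes $K$ and lies in $M$, the tuples $v(x)\cdot\mathbf{k}$ and $v(x')\cdot\mathbf{k}$ are $M$-conjugate for all $x,x'\in k^*$. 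Thus the extra dimension you hope to extract from the non-smooth fixed-point scheme is exactly absorbed by the torus part of $C_{P_\lambda}(K)$, and the inequality $\dim W>\dim(M\text{-orbit})$ is not established (and plausibly false) for a bare generating pair of $H$.

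The paper's proof avoids this by \emph{enlarging the tuple}: it sets $\mathbf{m}=(q_1,q_2,z_1,\dots,z_{n'})$ where the $z_i\in Z(R_u(P_\lambda))$ are chosen so that the group generated is $K_0=\langle K, Z(R_u(P_\lambda))\rangle$, and proves (Lemma~\ref{sizeofthecentralizerofLevi}) that $C_{P_\lambda}(K_0)=C_{R_u(P_\lambda)}(K_0)$, i.e.\ the centralizer has no Levi (in particular no torus) part. With that rigidification, the explicit description of $C_{R_u(P_\lambda)}(K)$ from Proposition~\ref{positive proposition} shows directly that $\mathbf{m}(a')=v(a')\cdot\mathbf{m}$ and $\mathbf{m}(b')=v(b')\cdot\mathbf{m}$ are $P_\lambda(M)$-conjugate only if $a'=b'$, yielding infinitely many $P_\lambda(M)$-classes inside the single $G$-orbit; Lemma~\ref{Daniel} then transfers this to $M$-classes. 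No dimension count is needed. To repair your argument you would need either to add such central elements to $\mathbf{m}$ (at which point you are essentially reproducing the paper's proof) or to actually prove the orbit-dimension inequality for the two-element tuple, which the torus action above shows cannot follow from the curve $C_1$ alone.
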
 

Now, we give an outline of the paper. 
In Section 2, we fix our notation which follows \cite{Borel}, \cite{Humphreys}, and \cite{Springer}. Also, we recall some preliminary results, in particular, Proposition \ref{GIT} from GIT. After that, in Section 3, we prove our main result, Theorem~\ref{G-cr but not M-cr}. Then in Section 4, we consider a rationality problem, and prove Theorem~\ref{rationality}. Finally, in Section 5, we discuss a problem concerning conjugacy classes, and prove Theorem~\ref{conjugacy-counterexample}.  
   
\section{Preliminaries}
\subsection{Notation}
Throughout the paper, we denote by $k$ an algebraically closed field of positive characteristic $p$. We denote the multiplicative group of $k$ by $k^*$. 
We use a capital roman letter, $G$, $H$, $K$, etc., to represent an algebraic group, and the corresponding lowercase gothic letter, $\mathfrak{g}$, $\mathfrak{h}$, $\mathfrak{k}$, etc., to represent its Lie algebra. We sometimes use another notation for Lie algebras:
$\text{Lie}\,G$, $\text{Lie}\,H$, and $\text{Lie}\,K$ are the Lie algebras of $G$, $H$, and $K$ respectively. 

We denote the identity component of $G$ by $G^{\circ}$. We write $[G,G]$ for the derived group of $G$.
The \emph{unipotent radical} of $G$ is denoted by $R_u(G)$. An algebraic group $G$ is \emph{reductive} if $R_u(G)=\{1\}$. In particular, $G$ is \emph{simple} as an algebraic group if $G$ is connected and all proper normal subgroups of $G$ are finite.

In this paper, when a subgroup $H$ of $G$ acts on $G$, $H$ always acts on $G$ by inner automorphisms. The adjoint representation of $G$ is denoted by $\text{Ad}_{\mathfrak{g}}$ or just Ad if no confusion arises. We write $C_G(H)$ and $\mathfrak{c}_{\mathfrak{g}}(H)$ for the global and the infinitesimal centralizers of $H$ in $G$ and $\mathfrak{g}$ respectively. We write $X(G)$ and $Y(G)$ for the set of characters and cocharacters of $G$ respectively. 

\subsection{Complete reducibility and GIT}
Let $G$ be a connected reductive algebraic group. We recall Richardson's formalism \cite[Sec.~2.1--2.3]{Richardson} for the characterization of a parabolic subgroup $P$ of $G$, a Levi subgroup $L$ of $P$, and the unipotent radical $R_u(P)$ of $P$ in terms of a cocharacter of $G$ and state a result from GIT (Proposition~\ref{GIT}).  

\begin{defn}
Let $X$ be an affine variety. Let $\phi : k^*\rightarrow X$ be a morphism of algebraic varieties. We say that $\displaystyle\lim_{a\rightarrow 0}\phi(a)$ exists if there exists a morphism $\hat\phi: k\rightarrow X$ (necessarily unique) whose restriction to $k^{*}$ is $\phi$. If this limit exists, we set $\displaystyle\lim_{a\rightarrow 0}\phi(a) = \hat\phi(0)$.
\end{defn}

\begin{defn}
Let $\lambda$ be a cocharacter of $G$. Define
$
P_\lambda := \{ g\in G \mid \displaystyle\lim_{a\rightarrow 0}\lambda(a)g\lambda(a)^{-1} \text{ exists}\}, $\\
$L_\lambda := \{ g\in G \mid \displaystyle\lim_{a\rightarrow 0}\lambda(a)g\lambda(a)^{-1} = g\}, \,
R_u(P_\lambda) := \{ g\in G \mid  \displaystyle\lim_{a\rightarrow0}\lambda(a)g\lambda(a)^{-1} = 1\}. 
$
\end{defn}
Note that $P_\lambda$ is a parabolic subgroup of $G$, $L_\lambda$ is a Levi subgroup of $P_\lambda$, and $R_u(P_\lambda)$ is a unipotent radical of $P_\lambda$~\cite[Sec.~2.1-2.3]{Richardson}. By~\cite[Prop.~8.4.5]{Springer}, any parabolic subgroup $P$ of $G$, any Levi subgroup $L$ of $P$, and any unipotent radical $R_u(P)$ of $P$ can be expressed in this form. It is well known that $L_\lambda = C_G(\lambda(k^*))$. 

Let $M$ be a reductive subgroup of $G$. Then, there is a natural inclusion $Y(M)\subseteq Y(G)$ of cocharacter groups. Let $\lambda\in Y(M)$. We write $P_\lambda(G)$ or just $P_\lambda$ for the parabolic subgroup of $G$ corresponding to $\lambda$, and $P_\lambda(M)$ for the parabolic subgroup of $M$ corresponding to $\lambda$. It is obvious that $P_\lambda(M) = P_\lambda(G)\cap M$ and $R_u(P_\lambda(M)) = R_u(P_\lambda(G))\cap M$. 

\begin{defn}\label{homomorphism}
Let $\lambda\in Y(G)$. Define a map $c_\lambda : P_\lambda \rightarrow L_\lambda$ by 
$
c_\lambda(g) := \displaystyle\lim_{a\rightarrow 0} \lambda(a) g \lambda(a)^{-1}.
$
\end{defn}
Note that the map $c_\lambda$ is the usual canonical projection from $P_\lambda$ to $L_\lambda \cong P_\lambda / R_u(P_\lambda)$.
Now, we state a result from GIT (see~\cite[Lem.~2.17, Thm.~3.1]{Ben2},~\cite[Thm.~3.3]{Ben3}).
\begin{prop}\label{GIT}
Let $H$ be a subgroup of $G$. Let $\lambda$ be a cocharacter of $G$ with $H\subseteq P_\lambda$. If $H$ is $G$-cr, there exists $v\in R_u(P_\lambda)$ such that $c_\lambda(h) = vhv^{-1}$ for every $h\in H$. 
\end{prop}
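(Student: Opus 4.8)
The plan is to transfer the whole statement to the geometry of a single orbit in $G^n$ under simultaneous conjugation. First I would fix a finite tuple $\mathbf{h}=(h_1,\dots,h_n)\in H^n$ whose entries generate $H$ as an algebraic group (i.e.\ generate a dense subgroup); such a tuple exists by a routine Noetherian argument, enlarging the tuple until the closure of the generated subgroup stabilizes. Then I would invoke the two standard dictionary facts of the GIT approach to complete reducibility: (i) $H$ is $G$-cr if and only if the orbit $G\cdot\mathbf{h}$ is closed in $G^n$; and (ii) the condition $H\subseteq P_\lambda$ is equivalent to $\mathbf{h}\in (P_\lambda)^n$, which by the very definition of $P_\lambda$ means that $\lim_{a\to 0}\lambda(a)\cdot\mathbf{h}$ exists, and that limit is exactly $\mathbf{h}':=(c_\lambda(h_1),\dots,c_\lambda(h_n))$. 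Thus the hypotheses become: $G\cdot\mathbf{h}$ is closed and $\mathbf{h}'=\lim_{a\to 0}\lambda(a)\cdot\mathbf{h}$ exists.

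The heart of the proof is then a general lemma about a reductive group $G$ acting on an affine variety $V$: if $G\cdot x$ is closed and $x':=\lim_{a\to 0}\lambda(a)\cdot x$ exists, then there is $v\in R_u(P_\lambda)$ with $v\cdot x=x'$. The easy half is that, since the orbit is closed, the limit point $x'$ lies in $\overline{G\cdot x}=G\cdot x$, so $x$ and $x'$ are at least $G$-conjugate. Upgrading this to conjugacy by an element of the unipotent radical $R_u(P_\lambda)$ is the substantive input, and I expect it to be the main obstacle: it comes from the instability/optimal-destabilization machinery (Kempf--Rousseau, Hesselink) together with a Bialynicki--Birula analysis of the attracting set of the $\lambda$-fixed point $x'$ inside the smooth affine homogeneous space $G\cdot x$. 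This is precisely the step that consumes the closedness of the orbit. Applying the lemma to $x=\mathbf{h}$ produces $v\in R_u(P_\lambda)$ with $v h_i v^{-1}=c_\lambda(h_i)$ for every $i$.

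Finally I would promote this from the generating tuple to all of $H$. Both $h\mapsto vhv^{-1}$ and $h\mapsto c_\lambda(h)$ restrict to group homomorphisms $H\to P_\lambda$ — the first because $v\in R_u(P_\lambda)\subseteq P_\lambda$ and $H\subseteq P_\lambda$, the second because $c_\lambda$ is the canonical projection $P_\lambda\to L_\lambda$. The locus on which two homomorphisms agree is a closed subgroup of $H$: it is closed as the preimage of the identity under the morphism $h\mapsto vhv^{-1}c_\lambda(h)^{-1}$, and it is a subgroup because both maps are homomorphisms. Since this locus contains $h_1,\dots,h_n$, it contains the dense subgroup they generate, hence equals $H$. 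This yields $c_\lambda(h)=vhv^{-1}$ for every $h\in H$ with $v\in R_u(P_\lambda)$, as required.
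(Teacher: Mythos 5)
The paper does not actually prove Proposition~\ref{GIT}: it is quoted verbatim from the literature (see \cite[Lem.~2.17, Thm.~3.1]{Ben2} and \cite[Thm.~3.3]{Ben3}), so there is no internal argument to compare against. Your proposal is a faithful reconstruction of how those cited sources establish the result, and every step you spell out is sound: the existence of a generating tuple $\mathbf{h}$ by a Noetherian argument, the dictionary ``$H$ is $G$-cr if and only if $G\cdot\mathbf{h}$ is closed in $G^n$'', the identification of $\lim_{a\to 0}\lambda(a)\cdot\mathbf{h}$ with $c_\lambda(\mathbf{h})$, and the final extension from the generators to all of $H$ via the agreement locus of the two homomorphisms $h\mapsto vhv^{-1}$ and $h\mapsto c_\lambda(h)$ (which is indeed a closed subgroup containing a dense subgroup of $H$). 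The one place where you do not supply an argument is the substantive step you yourself flag: upgrading ``$c_\lambda(\mathbf{h})$ lies in the closed orbit $G\cdot\mathbf{h}$'' to ``$c_\lambda(\mathbf{h})\in R_u(P_\lambda)\cdot\mathbf{h}$''. That statement is exactly \cite[Lem.~2.17]{Ben2}, i.e.\ the same black box the paper invokes by citation, so relative to the paper nothing is missing; but be aware that your attribution of its proof to Kempf--Rousseau/Hesselink optimality plus a Bia{\l}ynicki-Birula analysis is a plausible gloss rather than a proof, and a fully self-contained write-up would have to carry out that argument (or reproduce the short direct proof in the cited reference) rather than gesture at it.
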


\subsection{Root subgroups and root subspaces}
Let $G$ be a connected reductive algebraic group. Fix a maximal torus $T$ of $G$. Let $\Psi(G,T)$ denote the set of roots of $G$ with respect to $T$. We sometimes write $\Psi(G)$ for $\Psi(G,T)$. Fix a Borel subgroup $B$ containing $T$. Then $\Psi(B,T) = \Psi^+(G)$ is the set of positive roots of $G$ defined by $B$. Let $\Sigma(G,B) = \Sigma$ denote the set of simple roots of $G$ defined by $B$. Let $\zeta\in\Psi(G)$. We write $U_\zeta$ for the corresponding root subgroup of $G$ and $\mathfrak{u}_\zeta$ for the Lie algebra of $U_\zeta$. We define $G_\zeta := \langle U_\zeta, U_{-\zeta} \rangle$.  

Let $H$ be a subgroup of $G$ normalized by some maximal torus $T$ of $G$. Consider the adjoint representation of $T$ on $\mathfrak{h}$. The root spaces of $\mathfrak{h}$ with respect to $T$ are also root spaces of $\mathfrak{g}$ with respect to $T$, and the set of roots of $H$ relative to $T$, $\Psi(H,T) = \Psi(H) = \{ \zeta\in \Psi(G) \mid \mathfrak{g}_\zeta \subseteq \mathfrak{h} \}$, is a subset of $\Psi(G)$. 

Let $\zeta, \xi \in \Psi(G)$. Let $\xi^{\vee}$ be the coroot corresponding to $\xi$. Then $\zeta\circ\xi^{\vee}:k^{*}\rightarrow k^{*}$ is a homomorphism such that $(\zeta\circ\xi^{\vee})(a) = a^n$ for some $n\in\mathbb{Z}$. We define $\langle \zeta, \xi^{\vee} \rangle := n$.
Let $s_\xi$ denote the reflection corresponding to $\xi$ in the Weyl group of $G$. Each $s_\xi$ acts on the set of roots $\Psi(G)$ by the following formula~\cite[Lem.~7.1.8]{Springer}:
$
s_\xi\cdot\zeta = \zeta - \langle \zeta, \xi^{\vee} \rangle \xi. 
$
\noindent By \cite[Prop.~6.4.2, Lem.~7.2.1]{Carter}, we can choose homomorphisms $\epsilon_\zeta : k \rightarrow U_\zeta$  so that 
\begin{equation}
n_\xi \epsilon_\zeta(a) n_\xi^{-1}= \epsilon_{s_\xi\cdot\zeta}(\pm a), 
            \text{ where } n_\xi = \epsilon_\xi(1)\epsilon_{-\xi}(-1)\epsilon_{\xi}(1).  \label{n-action on group}
\end{equation}
We define $e_\zeta:=\epsilon_\zeta'(0)$. Then we have 
\begin{equation}
\text{Ad}(n_\xi) e_\zeta = \pm e_{s_\xi\cdot\zeta}. \label{n-action on Lie algebra}
\end{equation}

Now, we list four lemmas which we need in our calculations. 
The first one is~\cite[Prop.~8.2.1]{Springer}.
\begin{lem}\label{unipotent radical lemma}
Let $P$ be a parabolic subgroup of $G$. Any element $u$ in $R_u(P)$ can be expressed uniquely as 
\begin{equation*}
u = \prod_{i\in \Psi\left(R_u(P)\right)} \epsilon_i(a_i), \textup{ for some } a_i\in k, 
\end{equation*}
where the product is taken with respect to a fixed ordering of $\Psi\left(R_u(P)\right)$. 
\end{lem}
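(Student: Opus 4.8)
The plan is to realize $R_u(P)$ as a variety that is \emph{directly spanned} by the root subgroups attached to the roots in $\Psi(R_u(P))$, and to read off both existence and uniqueness of the expression from the fact that the associated product morphism is an isomorphism of varieties. First I would pass to the cocharacter picture: by \cite[Prop.~8.4.5]{Springer} we may write $P = P_\lambda$ and $R_u(P) = R_u(P_\lambda)$ for some $\lambda\in Y(G)$, so that $\Psi(R_u(P)) = \{\zeta\in\Psi(G)\mid \langle\zeta,\lambda\rangle>0\}$, where $\langle\zeta,\lambda\rangle$ is the integer with $(\zeta\circ\lambda)(a)=a^{\langle\zeta,\lambda\rangle}$. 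Two observations then follow at once. Each $U_\zeta$ with $\zeta\in\Psi(R_u(P))$ lies in $R_u(P)$, since $\lim_{a\to 0}\lambda(a)\epsilon_\zeta(x)\lambda(a)^{-1} = \lim_{a\to 0}\epsilon_\zeta(a^{\langle\zeta,\lambda\rangle}x) = 1$ because $\langle\zeta,\lambda\rangle>0$. Moreover $\Psi(R_u(P))$ is \emph{closed}: if $\zeta,\xi\in\Psi(R_u(P))$ and $\zeta+\xi\in\Psi(G)$, then $\langle\zeta+\xi,\lambda\rangle = \langle\zeta,\lambda\rangle+\langle\xi,\lambda\rangle>0$, so $\zeta+\xi\in\Psi(R_u(P))$.

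Next, fixing the ordering $\Psi(R_u(P)) = \{\zeta_1,\dots,\zeta_n\}$, I would study the product morphism $\mu : \prod_{i=1}^n U_{\zeta_i}\cong k^n \to R_u(P)$ given by $(a_1,\dots,a_n)\mapsto \prod_{i=1}^n \epsilon_{\zeta_i}(a_i)$; the lemma is precisely the assertion that $\mu$ is bijective. The differential is easy: since $e_{\zeta}=\epsilon_\zeta'(0)$ and the root space decomposition gives $\mathrm{Lie}\,R_u(P) = \bigoplus_{i=1}^n \mathfrak{u}_{\zeta_i}$, the map $d\mu_0$ sends $(v_1,\dots,v_n)$ to $\sum_i v_i e_{\zeta_i}$, which is an isomorphism onto $\mathrm{Lie}\,R_u(P)$. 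Hence $\mu$ is dominant and separable and $\dim R_u(P)=n$. Surjectivity then follows from the fact that $R_u(P)$ is generated as a group by the $U_{\zeta_i}$, together with the commutator (Chevalley) relations \cite{Springer}: the closedness of $\Psi(R_u(P))$ ensures that each commutator $[\epsilon_\zeta(a),\epsilon_\xi(b)]$ is again a product of $\epsilon$'s indexed by roots in $\Psi(R_u(P))$, so any word in the generators can be rearranged into the chosen order and the image of $\mu$ is a subgroup equal to $R_u(P)$.

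The main obstacle is the injectivity, equivalently the \emph{uniqueness} of the expression, which the surjectivity and the dimension count do not by themselves force in positive characteristic. The cleanest route I would take is an induction using the filtration of $R_u(P)$ by the value of $\langle\cdot,\lambda\rangle$. Writing $\Psi_m = \{\zeta\in\Psi(R_u(P))\mid \langle\zeta,\lambda\rangle=m\}$ and taking $m$ maximal, the subgroup $\prod_{\zeta\in\Psi_m} U_\zeta$ is central in $R_u(P)$: any commutator of elements of its root subgroups, or of one of them with an element of a lower root subgroup, is supported on roots of strictly larger $\lambda$-value, of which there are none, so the commutators vanish. It is therefore a normal vector group isomorphic to $k^{|\Psi_m|}$, since $\Psi_m$ contains no two roots summing to a root (their sum would lie at level $2m>m$). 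The quotient $R_u(P)/\prod_{\zeta\in\Psi_m}U_\zeta$ is the unipotent radical attached to the still-closed set $\Psi(R_u(P))\setminus\Psi_m$, to which the inductive hypothesis applies; splicing the inductive isomorphism on the quotient with the central factor $\prod_{\zeta\in\Psi_m}U_\zeta$ shows that $\mu$ is an isomorphism of varieties. In particular the factorization of each $u\in R_u(P)$ in the fixed order is unique, which is the content of the lemma. (Alternatively, having $\mu$ a separable bijective morphism onto the smooth variety $R_u(P)$, one may invoke the standard isomorphism criterion directly; in either formulation the uniqueness is the crux of the argument.)
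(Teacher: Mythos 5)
The paper does not actually prove this lemma: it is quoted directly as \cite[Prop.~8.2.1]{Springer} (the ``direct spanning'' of a $T$-stable connected unipotent group by the root subgroups it contains), so your text is a reconstruction of that standard argument rather than an alternative to anything in the paper. Your overall strategy --- pass to $P=P_\lambda$, note that $\Psi(R_u(P))$ is closed, get surjectivity of the product morphism from the commutator relations, and get uniqueness by induction along the filtration by the value of $\langle\cdot,\lambda\rangle$ --- is exactly the textbook route.

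One step, however, is stated incorrectly and needs repair. You claim that $\Psi(R_u(P))\setminus\Psi_m$ is ``still closed'' and that the quotient $R_u(P)/\prod_{\zeta\in\Psi_m}U_\zeta$ is ``the unipotent radical attached to'' that set. Neither holds in general: two roots of level $<m$ can sum to a root of level exactly $m$, so the complement of the top layer is not closed. Indeed, in the paper's own example the weight-$1$ roots $4$ and $7$ of $R_u(P_{\alpha\beta\gamma\delta\epsilon\eta})$ sum to the weight-$2$ root $42$, which is precisely what produces the $\epsilon_{42}(b_4b_7+\cdots)$ term in the proof of Proposition~\ref{positive proposition}. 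Moreover the quotient is not a subgroup of $G$, so an induction hypothesis phrased only for unipotent radicals of parabolics of $G$ does not apply to it. The standard fix is either to prove the statement for the larger class of $T$-stable closed connected unipotent groups and their $T$-equivariant quotients (the weights of $T$ on the quotient are indeed $\Psi(R_u(P))\setminus\Psi_m$, even though that set is not closed), or to run the filtration the other way: for each $j$ the set of roots of level $\ge j$ \emph{is} closed and is an ``ideal'' in $\Psi(R_u(P))$, so the corresponding subgroups form a central series of $R_u(P)$ with vector-group quotients. You should also justify injectivity on each abelian layer: in characteristic $p$ a bijective differential does not rule out a finite \'etale kernel of $\mu$; one uses $T$-equivariance instead (a finite $T$-stable kernel lies in the $T$-fixed locus, which is trivial because all the weights $\zeta_i$ are nonzero). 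With these repairs your argument becomes the proof of \cite[Prop.~8.2.1]{Springer}.
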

The next two lemmas~\cite[Lem.~32.5 and  Lem.~33.3]{Humphreys} are used to calculate $C_{R_u(P)}(K)$.
\begin{lem}\label{commutation lemma}
Let $\xi, \zeta \in \Psi(G)$. If no positive integral linear combination of $\xi$ and $\zeta$ is a root of $G$, then
\begin{equation*}
\epsilon_\xi(a)\epsilon_\zeta(b) = \epsilon_\zeta(b)\epsilon_\xi(a).
\end{equation*}
\end{lem}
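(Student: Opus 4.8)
The plan is to deduce the lemma from the general commutator formula for root subgroups, which is the natural tool here. Recall that for two roots $\xi,\zeta$ with $\zeta\neq-\xi$ one has a Chevalley commutator relation
\begin{equation*}
\epsilon_\xi(a)\epsilon_\zeta(b)\epsilon_\xi(a)^{-1}\epsilon_\zeta(b)^{-1} = \prod_{\substack{i,j\geq 1\\ i\xi+j\zeta\in\Psi(G)}} \epsilon_{i\xi+j\zeta}\!\left(c_{ij}\,a^i b^j\right),
\end{equation*}
where the $c_{ij}\in k$ are structure constants and the product is taken in a fixed order. So the first point to settle is that the hypothesis forces $\zeta\neq-\xi$: if $\zeta=-\xi$ then $2\xi+\zeta=\xi$ would be a positive integral combination of $\xi$ and $\zeta$ that is a root, contradicting the assumption. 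Hence the commutator relation applies.

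Next I would observe that the hypothesis is precisely the statement that the index set $\{(i,j)\in\mathbb{Z}_{\geq1}^2 : i\xi+j\zeta\in\Psi(G)\}$ is empty. (The remaining degenerate case $\zeta=\xi$ is harmless: then $i\xi+j\zeta=(i+j)\xi$ with $i+j\geq2$ is never a root in a reduced root system, so the hypothesis holds automatically, and $\epsilon_\xi(a)\epsilon_\xi(b)=\epsilon_\xi(a+b)=\epsilon_\xi(b)\epsilon_\xi(a)$ since $U_\xi\cong\mathbb{G}_a$ is abelian.) With an empty index set the right-hand side of the commutator relation is the empty product, equal to the identity, so $\epsilon_\xi(a)\epsilon_\zeta(b)\epsilon_\xi(a)^{-1}\epsilon_\zeta(b)^{-1}=1$, which rearranges at once to the claimed identity. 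Note that because the index set is empty the characteristic plays no role here; the vanishing of some $c_{ij}$ in characteristic $2$ could only produce \emph{more} commuting pairs, never fewer.

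The one substantive point, and the main obstacle if one insists on a self-contained argument, is justifying the commutator relation itself, i.e.\ that the commutator lands in the subgroup $\langle U_{i\xi+j\zeta} : i,j\geq1,\ i\xi+j\zeta\in\Psi(G)\rangle$ generated by the root subgroups for the relevant intermediate roots. The clean way to see this is to pass to the rank-$2$ subsystem $\Psi(G)\cap(\mathbb{Z}\xi+\mathbb{Z}\zeta)$ and the associated rank-$2$ reductive subgroup, where the claim reduces to a finite verification against the classification of rank-$2$ root systems ($A_1\times A_1$, $A_2$, $B_2$, $G_2$); alternatively one invokes the general structure theory of the unipotent group attached to a closed set of roots. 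Since the present paper follows \cite{Humphreys} and \cite{Springer}, I would simply cite the commutator formula from there, as in \cite[Lem.~32.5, Lem.~33.3]{Humphreys}, rather than reprove it, after which the lemma is immediate from the emptiness of the index set.
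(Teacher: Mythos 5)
Your proof is correct and matches the paper's treatment: the paper gives no argument of its own but simply cites \cite[Lem.~32.5]{Humphreys}, which is exactly the reduction to the Chevalley commutator formula with an empty index set that you carry out (including the correct observation that the hypothesis rules out $\zeta=-\xi$). Nothing further is needed.
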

\begin{lem}\label{A_2 commutation lemma}
Let $\Psi$ be the root system of type $A_2$ spanned by roots $\xi$ and $\zeta$. Then
\begin{equation*}
\epsilon_\xi(a)\epsilon_\zeta(b) = \epsilon_\zeta(b)\epsilon_\xi(a)\epsilon_{\xi+\zeta}(\pm ab).
\end{equation*}
\end{lem}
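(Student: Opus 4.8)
The plan is to recognize the identity as the special case of the general Chevalley commutator relations corresponding to a root system of type $A_2$, where the combinatorics of the root system forces all but one term of the commutator product to vanish. First I would record the general commutator formula (cf.~\cite[Lem.~33.3]{Humphreys},~\cite{Carter}), which expresses the group commutator as an ordered product
\[
\epsilon_\xi(a)\epsilon_\zeta(b)\epsilon_\xi(a)^{-1}\epsilon_\zeta(b)^{-1} = \prod_{i,j>0} \epsilon_{i\xi+j\zeta}\bigl(N_{ij}\, a^i b^j\bigr),
\]
where the product runs over all positive integral combinations $i\xi+j\zeta$ that happen to be roots of $G$, and the $N_{ij}$ are integer structure constants determined by the chosen parametrizations.

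Next I would examine which combinations $i\xi+j\zeta$ with $i,j\geq 1$ actually occur in a root system of type $A_2$ spanned by $\xi$ and $\zeta$. Taking $\xi,\zeta$ to be the two simple roots (up to relabeling), the positive roots are exactly $\xi$, $\zeta$, and $\xi+\zeta$; there is no root of the form $2\xi+\zeta$, $\xi+2\zeta$, or any higher combination. Consequently the only surviving factor in the product is the one with $(i,j)=(1,1)$, yielding a single term $\epsilon_{\xi+\zeta}(N_{11}\, ab)$. Rewriting the commutator then gives $\epsilon_\xi(a)\epsilon_\zeta(b) = \epsilon_\zeta(b)\epsilon_\xi(a)\,\epsilon_{\xi+\zeta}(N_{11}\, ab)$, so it remains only to control $N_{11}$.

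The absolute value $|N_{11}|=1$ is forced by the $A_2$ geometry: the $\xi$-string through $\zeta$ is just $\zeta,\ \zeta+\xi$, so in the standard normalization $|N_{11}|=p+1$ with $p$ the largest integer for which $\zeta-p\xi$ is a root, and here $p=0$. Thus $N_{11}=\pm1$, which accounts for the $\pm ab$ in the statement. As a fully self-contained cross-check, I would compute directly in $SL_3(k)$: with $\epsilon_\xi(a)=I+aE_{12}$, $\epsilon_\zeta(b)=I+bE_{23}$, $\epsilon_{\xi+\zeta}(c)=I+cE_{13}$, the relations $E_{12}E_{23}=E_{13}$ and $E_{23}E_{12}=0$ give $\epsilon_\xi(a)\epsilon_\zeta(b)=\epsilon_\zeta(b)\epsilon_\xi(a)\,\epsilon_{\xi+\zeta}(ab)$ on the nose. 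The general case reduces to this because the subgroup $\langle U_{\pm\xi},U_{\pm\zeta}\rangle$ is semisimple of type $A_2$, hence a central isogeny image of $SL_3$, and the unipotent root subgroups together with their products are insensitive to the center.

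I expect the only genuine subtlety to be the sign. It is not canonical: it is governed by the signs appearing in the chosen homomorphisms $\epsilon_\zeta$ fixed via (\ref{n-action on group}), and a different admissible pinning can flip it. This is precisely why the lemma must be stated with $\pm$ rather than a definite sign. The vanishing of all higher-order terms and the elementary matrix multiplication are both routine once the $A_2$ root combinatorics have been pinned down, so the conceptual content lies entirely in the observation that $\xi+\zeta$ is the unique positive integral combination that is a root.
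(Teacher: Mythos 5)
Your proof is correct, and it is essentially the same argument the paper relies on: the paper offers no proof of this lemma at all, citing it directly from Humphreys (Lem.~32.5, 33.3), and your route --- the Chevalley commutator formula, the observation that $\xi+\zeta$ is the unique root among the positive integral combinations $i\xi+j\zeta$ in type $A_2$, and the structure-constant fact $|N_{11}|=p+1=1$ from the root string, with the sign left ambiguous because it depends on the chosen parametrizations $\epsilon_\zeta$ fixed via the Chevalley basis --- is exactly the standard argument behind that citation. The only step you leave tacit is that moving the factor $\epsilon_{\xi+\zeta}(N_{11}ab)$ from the left of $\epsilon_\zeta(b)\epsilon_\xi(a)$ to the right uses that $U_{\xi+\zeta}$ is central in $\langle U_\xi,U_\zeta\rangle$, but this follows at once from the same $A_2$ combinatorics (no root of the form $2\xi+\zeta$ or $\xi+2\zeta$) together with the paper's Lemma~\ref{commutation lemma}, so nothing is missing.
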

The last result is used to calculate $\mathfrak{c}_{\text{Lie}\,(R_u(P))}(K)$. 
\begin{lem}\label{infinitesimal centralizer lemma}
Suppose that $p=2$. Let $W$ be a subgroup of $G$ generated by all the $n_\xi$ where $\xi\in \Psi(G)$ (the group $W$ is isomorphic to the Weyl group of $G$). Let $K$ be a subgroup of $W$. Let $\{ O_i \mid i = 1\cdots m \}$ be the set of orbits of the action of $K$ on $\Psi\left(R_u(P)\right)$. Then, 
\begin{equation*}
\mathfrak{c}_{\textup{Lie}\left(R_u(P)\right)}(K) = \left\{ \left.\sum_{i=1}^{m}a_i \sum_{\zeta\in O_i} e_\zeta \right| a_i \in k \right\}.
\end{equation*}   
\begin{proof}
When $p=2$, (\ref{n-action on Lie algebra}) yields
$
\textup{Ad}(n_\xi) e_\zeta = e_{n_\xi\cdot \zeta}.
$
Then an easy calculation gives the desired result. 
 \end{proof}
\end{lem}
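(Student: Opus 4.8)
The plan is to reduce the computation of the infinitesimal centralizer to the elementary fact that, for a group acting on a vector space by permuting a fixed basis, the fixed-point subspace is spanned by the orbit sums. First I would record that $\textup{Lie}(R_u(P))$ has the root vectors $\{e_\zeta \mid \zeta \in \Psi(R_u(P))\}$ as a $k$-basis; this is immediate from the description of $R_u(P)$ via Lemma~\ref{unipotent radical lemma}, since $\textup{Lie}(R_u(P)) = \bigoplus_{\zeta \in \Psi(R_u(P))} \mathfrak{u}_\zeta$ and $e_\zeta = \epsilon_\zeta'(0)$ spans $\mathfrak{u}_\zeta$.

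Next, the crucial observation is how $K$ acts on this basis. By (\ref{n-action on Lie algebra}) each generator satisfies $\textup{Ad}(n_\xi) e_\zeta = \pm e_{s_\xi \cdot \zeta}$, and because we are in characteristic $2$ the ambiguous sign is forced to be $+1$; hence $\textup{Ad}(n_\xi) e_\zeta = e_{s_\xi \cdot \zeta} = e_{n_\xi \cdot \zeta}$. Writing an arbitrary element of $K$ as a word in the $n_\xi$ and composing, the signs again collapse in characteristic $2$, so every $g \in K$ acts by $\textup{Ad}(g) e_\zeta = e_{g \cdot \zeta}$. In other words, the adjoint action of $K$ on $\textup{Lie}(R_u(P))$ is exactly the permutation representation attached to the action of $K$ on $\Psi(R_u(P))$ (which is well defined precisely because $K$ stabilises $\Psi(R_u(P))$, so the orbits $O_i$ make sense).

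With this in hand, the final step is pure linear algebra. Writing a general element as $X = \sum_\zeta c_\zeta e_\zeta$, the condition $\textup{Ad}(g) X = X$ for all $g \in K$ is equivalent to $c_\zeta = c_{g \cdot \zeta}$ for all $g$ and $\zeta$, i.e.\ to the coefficient function being constant on each $K$-orbit $O_i$. Collecting the common value on $O_i$ into a scalar $a_i$ then yields $X = \sum_{i=1}^m a_i \sum_{\zeta \in O_i} e_\zeta$, which is exactly the claimed description of $\mathfrak{c}_{\textup{Lie}(R_u(P))}(K)$.

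I expect that there is essentially no serious obstacle here: the whole content is the vanishing of signs in characteristic $2$, which turns a potentially delicate signed-permutation computation into a clean permutation action. The only points requiring a little care are checking that the sign really collapses for arbitrary words in the generators (not just single reflections) and confirming that $K$ genuinely permutes $\Psi(R_u(P))$ so that the orbit sums are the natural invariants; both are routine.
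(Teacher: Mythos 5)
Your proposal is correct and follows essentially the same route as the paper: the sign in $\textup{Ad}(n_\xi)e_\zeta = \pm e_{s_\xi\cdot\zeta}$ collapses in characteristic $2$, so $K$ permutes the basis of root vectors and the fixed subspace is spanned by the orbit sums. You have merely written out explicitly the ``easy calculation'' that the paper leaves to the reader.
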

\begin{rem}\label{2 is important}
Lemma~\ref{infinitesimal centralizer lemma} holds in $p=2$ but fails in $p=3$. 
\end{rem}
\section{The $E_7$ example}
\subsection{Step~1}
Let $G$ be a simple algebraic group of type $E_7$ defined over $k$ of characteristic $2$. 
Fix a maximal torus $T$ of $G$. Fix a Borel subgroup $B$ of $G$ containing $T$. 
Let $\Sigma = \{ \alpha, \beta, \gamma, \delta, \epsilon, \eta, \sigma \}$ be the set of simple roots of $G$. Figure~\ref{Dynkin diagram of E_7} defines how each simple root of $G$ corresponds to each node in the Dynkin diagram of $E_7$. 

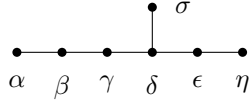
\begin{figure}[!h]
                \begin{center}
                \begin{tikzpicture}[scale=0.6]
                \draw (0,0) to (1,0);
                \draw (1,0) to (2,0);
                \draw (2,0) to (3,0);
                \draw (3,0) to (4,0);
                \draw (4,0) to (5,0);
                \draw (3,0) to (3,1);
                \fill (0,0) circle (1mm);
                \fill (1,0) circle (1mm);
                \fill (2,0) circle (1mm);
                \fill (3,0) circle (1mm);
                \fill (4,0) circle (1mm);
                \fill (5,0) circle (1mm);
                \fill (3,1) circle (1mm);
                \draw[below] (0,-0.3) node{$\alpha$};
                \draw[below] (1,-0.3) node{$\beta$};
                \draw[below] (2,-0.3) node{$\gamma$};
                \draw[below] (3,-0.3) node{$\delta$};
                \draw[below] (4,-0.3) node{$\epsilon$};
                \draw[below] (5,-0.3) node{$\eta$};
                \draw[right] (3.3,1) node{$\sigma$};
                \end{tikzpicture}
                \end{center}
                \caption{Dynkin diagram of $E_7$}
                \label{Dynkin diagram of E_7}
\end{figure}

From~\cite[Appendix, Table~B]{Freudenthal}, one knows the coefficients of all positive roots of $G$. We label all positive roots of $G$ in Table~\ref{Roots of E_7} in the Appendix. Our ordering of roots is different from ~\cite[Appendix, Table~B]{Freudenthal}, which will be convenient later on. 

The set of positive roots is 
$
\Psi^{+}(G) = \{ 1, 2, \cdots, 63 \}.
$
Note that $\{1, \cdots, 35\}$ and $\{36, \cdots, 42\}$ are precisely the roots of $G$ such that the coefficient of $\sigma$ is $1$ and $2$ respectively. We call the roots of the first type \emph{weight-1 roots}, and the second type \emph{weight-2 roots}. 
Define
\begin{equation*}
L_{\alpha\beta\gamma\delta\epsilon\eta} := \langle T, G_{43}, \cdots, G_{63} \rangle, \,
P_{\alpha\beta\gamma\delta\epsilon\eta} := \langle L_{\alpha\beta\gamma\delta\epsilon\eta}, U_{1}, \cdots, U_{42} \rangle.
\end{equation*}
Then $P_{\alpha\beta\gamma\delta\epsilon\eta}$ is a parabolic subgroup of $G$, and $L_{\alpha\beta\gamma\delta\epsilon\eta}$ is a Levi subgroup of $P_{\alpha\beta\gamma\delta\epsilon\eta}$.
Note that $L_{\alpha\beta\gamma\delta\epsilon\eta}$ is of type $A_6$. We have
$
\Psi\left(R_u(P_{\alpha\beta\gamma\delta\epsilon\eta})\right) = \{1,\cdots, 42\}.
$
\noindent Define 
\begin{equation*}
q_1 := n_\epsilon n_\beta n_\gamma n_\alpha n_\beta, \,
q_2 := n_\epsilon n_\beta n_\gamma n_\alpha n_\beta n_\eta n_\delta n_\beta, \,
K := \langle q_1, q_2 \rangle. 
\end{equation*}
Let $\zeta_1, \zeta_2$ be simple roots of $G$. From the Cartan matrix of $E_7$~\cite[Sec.~11.4]{Humphreys2} we have
\begin{equation*}
\langle \zeta_1, \zeta_2 \rangle =
\begin{cases}
2, & \text{if } \zeta_1 = \zeta_2. \\
-1, & \text{if } \zeta_1 \text{ is adjacent to } \zeta_2 \text{ in the Dynkin diagram}. \\ 
0, & \text{otherwise}. 
\end{cases}
\end{equation*}
From this, it is not difficult to calculate $\langle \xi, \zeta^{\vee} \rangle$ for all $\xi \in \Psi\left(R_u(P_{\alpha\beta\gamma\delta\epsilon\eta})\right)$ and for all $\zeta\in \Sigma$. 
These calculations show how $n_\alpha, n_\beta, n_\gamma, n_\delta, n_\epsilon$, and $n_\eta$ act on 
$\Psi\left(R_u(P_{\alpha\beta\gamma\delta\epsilon\eta})\right)$. Let $\pi: \langle n_\alpha, n_\beta, n_\gamma, n_\delta, n_\epsilon, n_\eta \rangle \rightarrow {\rm Sym}\left(\Psi\left(R_u(P_{\alpha\beta\gamma\delta\epsilon\eta})\right)\right)\cong S_{42}$ be the corresponding homomorphism. 
Then we have
\begin{alignat*}{2}
\pi(q_1) =& (1\; 2)(3\; 6)(4\; 7)(9\; 10)(11\; 12)(13\; 14)(15\; 20)
(16\; 17)(18\; 21)(19\; 23)(22\; 25)(24\; 26)\\&(27\; 28)(29\; 32)(31\; 33)(34\; 35)(36\; 38)(37\; 39)(40\; 41), \\
\pi(q_2) =& (1\; 6\; 7\; 5\; 4\; 3\; 2)(8\; 10\; 12\; 14\; 13\; 11\; 9)(15\; 16\; 21\; 23\; 26\; 27\; 22)(17\; 20\; 25\; 28\; 24\; 19\; 18)\\&
(29\; 30\; 32\; 33\; 35\; 34\; 31)(36\; 38\; 39\; 41\; 42\; 40\; 37).
\end{alignat*} 
It is easy to see that $K\cong D_{14}$.
The orbits of $K$ in $\Psi\left(R_u(P_{\alpha\beta\gamma\delta\epsilon\eta})\right)$ are
\begin{alignat*}{2}
O_1 =& \{ 1, \cdots, 7\}, O_8 = \{ 8, \cdots, 14\}, O_{15} = \{ 15, \cdots, 28\}, O_{29} =& \{ 29, \cdots, 35\},\\ O_{36} =& \{ 36, \cdots, 42\}.
\end{alignat*}
Thus Lemma~\ref{infinitesimal centralizer lemma} yields 
\begin{prop}\label{inf-E_7}
\begin{alignat*}{2}
\mathfrak{c}_{\textup{Lie}\left(R_u(P_{\alpha\beta\gamma\delta\epsilon\eta})\right)}(K)=&\Biggr\{ a\left(\sum_{\lambda\in O_1}e_{\lambda}\right)+
b\left(\sum_{\lambda\in O_8}e_{\lambda}\right)+c\left(\sum_{\lambda\in O_{15}}e_{\lambda}\right)+
d\left(\sum_{\lambda\in O_{29}}e_{\lambda}\right)\nonumber\\
&+m\left.\left(\sum_{\lambda\in O_{36}}e_{\lambda}\right)
\right| a,b,c,d,m \in k \Biggr\}. 
\end{alignat*}
\end{prop}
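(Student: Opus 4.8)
The plan is to deduce Proposition~\ref{inf-E_7} as a direct application of Lemma~\ref{infinitesimal centralizer lemma}, which in characteristic $2$ reduces the computation of $\mathfrak{c}_{\text{Lie}(R_u(P_{\alpha\beta\gamma\delta\epsilon\eta}))}(K)$ to determining the orbits of $K$ on the root set $\Psi(R_u(P_{\alpha\beta\gamma\delta\epsilon\eta})) = \{1,\dots,42\}$. The essential content is therefore combinatorial: once the orbits are known, the centralizer is the $k$-span of the orbit-sum vectors $\sum_{\lambda\in O_i} e_\lambda$, and one simply reads off the stated formula after renaming the free coefficients $a_1,\dots,a_5$ as $a,b,c,d,m$.

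First I would confirm that the homomorphism $\pi$ is well defined, i.e.\ that $\langle n_\alpha,\dots,n_\eta\rangle$ actually permutes $\{1,\dots,42\}$. Since each of $\alpha,\beta,\gamma,\delta,\epsilon,\eta$ has $\sigma$-coefficient $0$, the reflection formula $s_\xi\cdot\zeta = \zeta - \langle\zeta,\xi^\vee\rangle\xi$ preserves the $\sigma$-coefficient of every root; moreover, for $\xi$ a simple root of the Levi the only positive root sent to a negative one is $\xi$ itself, which has $\sigma$-coefficient $0$. Hence every root of $\sigma$-weight $1$ or $2$ is sent to a positive root of the same weight, so the weight-$1$ set $\{1,\dots,35\}$ and the weight-$2$ set $\{36,\dots,42\}$ are each stable and $\pi$ indeed lands in $\mathrm{Sym}(\{1,\dots,42\})$.

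Next I would compute the two permutations $\pi(q_1)$ and $\pi(q_2)$. Using the Cartan matrix of $E_7$ to evaluate $\langle\xi,\zeta^\vee\rangle$, the action of each generator $n_\zeta$ on a root $\xi$ is $s_\zeta\cdot\xi$, and $\pi(q_1),\pi(q_2)$ are obtained by composing these reflections along the words defining $q_1$ and $q_2$. This is the one genuinely laborious step, as it requires tracking each of the $42$ roots through a product of five or eight reflections; the payoff is the explicit cycle decompositions displayed above. From these two permutations I would then compute the orbits of $K=\langle q_1,q_2\rangle$ by closing $\{1,\dots,42\}$ under both generators, obtaining $O_1=\{1,\dots,7\}$, $O_8=\{8,\dots,14\}$, $O_{15}=\{15,\dots,28\}$, $O_{29}=\{29,\dots,35\}$, and $O_{36}=\{36,\dots,42\}$.

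Finally, substituting these five orbits into Lemma~\ref{infinitesimal centralizer lemma} gives the result at once. The main obstacle is not conceptual but bookkeeping: correctly producing the full action $\pi(q_1),\pi(q_2)$ on all $42$ roots, and correctly verifying that $\langle q_1,q_2\rangle$ fuses these roots into exactly five orbits — in particular that the fourteen roots $15,\dots,28$ form a single orbit rather than splitting. Everything downstream of the orbit computation is a one-line invocation of the lemma.
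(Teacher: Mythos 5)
Your proposal is correct and follows exactly the route the paper takes: the paper derives Proposition~\ref{inf-E_7} by computing $\pi(q_1)$ and $\pi(q_2)$ from the Cartan matrix, reading off the five orbits of $K$ on $\Psi(R_u(P_{\alpha\beta\gamma\delta\epsilon\eta}))$, and then invoking Lemma~\ref{infinitesimal centralizer lemma} in one line. Your additional check that the $\sigma$-weight is preserved (so that $\pi$ genuinely lands in $\mathrm{Sym}(\{1,\dots,42\})$) is a sensible detail the paper leaves implicit, but it does not constitute a different argument.
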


The following is the most important technical result in this paper.
\begin{prop}\label{positive proposition}
Let $u\in C_{R_u(P_{\alpha\beta\gamma\delta\epsilon\eta})}(K)$. Then $u$ must have the form, 
\begin{equation*}
u=\prod_{i=1}^{7}\epsilon_i(a)\prod_{i=8}^{14}\epsilon_i(b)\prod_{i=15}^{28}\epsilon_i(c)\prod_{i=29}^{35}
\epsilon_i(a+b+c)\prod_{i=36}^{42}\epsilon_i(a_i) \textup{ for some } a,b,c,a_i\in k.  
\end{equation*}  
\end{prop}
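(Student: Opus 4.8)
The plan is to pin down a general element $u \in C_{R_u(P_{\alpha\beta\gamma\delta\epsilon\eta})}(K)$ by exploiting the two-step nilpotent structure of $R_u := R_u(P_{\alpha\beta\gamma\delta\epsilon\eta})$. By Lemma~\ref{unipotent radical lemma} I would write $u = \prod_{i=1}^{35}\epsilon_i(c_i)\prod_{i=36}^{42}\epsilon_i(a_i)$ with the weight-$1$ factors first. Since the sum of two weight-$2$ roots and the sum of a weight-$1$ and a weight-$2$ root each have $\sigma$-coefficient $\geq 3$ and so are never roots of $E_7$, Lemma~\ref{commutation lemma} shows that $Z := \langle U_{36}, \ldots, U_{42}\rangle$ is central in $R_u$ and that $[R_u, R_u]\subseteq Z$; moreover $K$ (lying in the Weyl group) permutes the weight-$1$ and the weight-$2$ root subgroups separately, with all the signs in (\ref{n-action on group}) equal to $1$ because $p = 2$.

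First I would read off the condition modulo $Z$. Working in the abelian quotient $R_u/Z$, conjugation by $q\in K$ sends $u$ to $\prod_{i=1}^{35}\epsilon_{q\cdot i}(c_i)$, so $quq^{-1} = u$ forces $c_i$ to be constant on each $K$-orbit of weight-$1$ roots, i.e. $c_i = a$ on $O_1$, $c_i = b$ on $O_8$, $c_i = c$ on $O_{15}$, and $c_i = d$ on $O_{29}$. This already yields every feature of the claimed form except the crucial identity $d = a + b + c$.

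The heart of the proof, and the main obstacle, is the weight-$2$ component. Reordering $quq^{-1}$ back into the standard ordering of Lemma~\ref{unipotent radical lemma} uses Lemma~\ref{A_2 commutation lemma}, and since $R_u$ is two-step nilpotent every resulting correction lands in $Z$ and is a sum of quadratic terms $\pm c_\mu c_\nu$ indexed by pairs of weight-$1$ roots $\mu, \nu$ with $\mu + \nu$ a weight-$2$ root. Thus, after imposing the constant-on-orbits condition, the weight-$2$ part of $quq^{-1} = u$ reads, for each weight-$2$ root $j$,
\begin{equation*}
a_{q^{-1}\cdot j} + w_j(a,b,c,d) = a_j,
\end{equation*}
where $w_j$ is quadratic in $a,b,c,d$. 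Summing these equations over the single orbit $O_{36}$ — on which $q_2$ acts as a $7$-cycle, so the permuted terms $a_{q^{-1}\cdot j}$ sum to $\sum_j a_j$ and cancel the right-hand side — leaves the purely combinatorial relation $\sum_{j\in O_{36}} w_j = 0$. The remaining task is to evaluate this sum: one enumerates the pairs of weight-$1$ roots whose sum is a weight-$2$ root and tallies, modulo $2$, how often such a pair lies inside one orbit $O_1, O_8, O_{15}, O_{29}$ versus across two of them. I expect the cross-orbit products $ab, ac, \ldots$ to occur an even number of times and hence vanish, while each within-orbit contribution survives once, leaving $a^2 + b^2 + c^2 + d^2 = (a+b+c+d)^2$ in characteristic $2$. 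This gives $d = a+b+c$ and finishes the proof, with the weight-$2$ coefficients $a_i$ left unconstrained in the statement. The delicate point is precisely this mod-$2$ tally of root pairings, where the characteristic-$2$ hypothesis does the real work: signs are immaterial and the quadratic form collapses to a perfect square. Note finally that it is exactly the disappearance of these quadratic corrections upon linearization that makes $d$ an \emph{independent} free parameter in $\mathfrak{c}_{\textup{Lie}(R_u)}(K)$ (Proposition~\ref{inf-E_7}), which is the origin of the non-separability driving the whole construction.
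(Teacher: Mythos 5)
Your strategy is the paper's strategy (canonical form via Lemma~\ref{unipotent radical lemma}, constancy on the weight-$1$ orbits read off from the abelian quotient $R_u(P)/Z$, then a quadratic relation coming from the weight-$2$ corrections), but the decisive step is both left unproved and mis-specified. The corrections produced by Lemma~\ref{A_2 commutation lemma} when you restore the standard ordering of $quq^{-1}$ are indexed not by \emph{all} pairs $\{\mu,\nu\}$ of weight-$1$ roots with $\mu+\nu$ a weight-$2$ root, but only by the \emph{inversions} of the permutation $\pi(q)$ relative to the fixed ordering, i.e.\ the pairs of factors whose relative order is actually reversed. These two counts genuinely differ: ten pairs of weight-$1$ roots sum to the root $42$, but only four of them, namely $\{4,7\},\{11,12\},\{22,25\},\{34,35\}$ (each a $2$-cycle of $\pi(q_1)$), are inverted by $q_1$ and hence appear in the correction to $\epsilon_{42}$ in (\ref{reordered}). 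So the tally you propose --- ``enumerate the pairs of weight-$1$ roots whose sum is a weight-$2$ root and count how often such a pair lies inside one orbit'' --- is not the relevant quantity, and in any case you explicitly flag the outcome as something you ``expect'' rather than verify. Since the identity $d=a+b+c$ is the entire content of the proposition (and the source of the non-separability that drives the whole paper), this is a genuine gap, not a routine omission.

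The paper's proof avoids your orbit-summation entirely: it conjugates by the involution $q_1$, whose action on $\{36,\dots,42\}$ fixes the root $42$, so no cancellation over an orbit is needed --- one simply compares the argument of $\epsilon_{42}$ on the two sides of $q_1uq_1^{-1}=u$. The explicit reordering computation shows the correction there is exactly $b_4b_7+b_{11}b_{12}+b_{22}b_{25}+b_{34}b_{35}$, one product from each weight-$1$ orbit, whence $b_{42}=a^2+b^2+c^2+d^2+b_{42}$ and $(a+b+c+d)^2=0$ in characteristic $2$. To repair your argument you would have to determine, for your chosen $q$ and the fixed ordering of $\Psi(R_u(P))$, precisely which inverted pairs sum to each weight-$2$ root and then take the mod-$2$ tally of those; at that point you have redone the paper's calculation, only spread over all seven weight-$2$ roots instead of the single fixed one.
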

\begin{proof}
By Lemma~\ref{unipotent radical lemma}, $u$ can be expressed uniquely as 
$
u = \prod_{i = 1}^{42} \epsilon_i(b_i) \textup{ for some } b_i\in k.
$
By (\ref{n-action on group}), we have
$
n_\xi \epsilon_\zeta(a) n_\xi^{-1}= \epsilon_{s_\xi\cdot\zeta}(a) \textup{ for any } a\in k \textup{ and }\xi, \zeta \in \Psi(G). 
$ 
Thus we have
\begin{alignat}{2}
q_1 u q_1^{-1}=&\; q_1 \left( \prod_{i=1}^{42} \epsilon_i(b_i)\right) q_1^{-1}\nonumber\\
                        =&\; \left( \prod_{i=1}^{7} \epsilon_{q_1 \cdot i}(b_i)\right)\left(\prod_{i=8}^{14} \epsilon_{q_1 \cdot i}(b_i)\right)\left( \prod_{i=15}^{28} \epsilon_{q_1 \cdot i}(b_i)\right)\left( \prod_{i=29}^{35} \epsilon_{q_1 \cdot i}(b_i)\right)\nonumber\\
                          &\; \left( \prod_{i=36}^{42} \epsilon_{q_1 \cdot i}(b_i)\right).\label{order}
\end{alignat}
A calculation using the commutator relations (Lemma~\ref{commutation lemma} and Lemma~\ref{A_2 commutation lemma}) shows that
\begin{alignat}{2}
q_1 u q_1^{-1} =&\; \epsilon_1(b_2)\epsilon_2(b_1)\epsilon_3(b_6)\epsilon_4(b_7)
\epsilon_5(b_5)\epsilon_6(b_3)\epsilon_7(b_4)\epsilon_8(b_8)\epsilon_9(b_{10})\epsilon_{10}(b_9)\epsilon_{11}(b_{12})
\epsilon_{12}(b_{11})\epsilon_{13}(b_{14})\nonumber\\
&\; \epsilon_{14}(b_{13}) \epsilon_{15}(b_{20})\epsilon_{16}(b_{17})\epsilon_{17}(b_{16})\epsilon_{18}(b_{21})
\epsilon_{19}(b_{23})\epsilon_{20}(b_{15})\epsilon_{21}(b_{18})\epsilon_{22}(b_{25})\epsilon_{23}(b_{19})\epsilon_{24}(b_{26})\nonumber\\
&\; \epsilon_{25}(b_{22})\epsilon_{26}(b_{24})\epsilon_{27}(b_{28})\epsilon_{28}(b_{27}) \epsilon_{29}(b_{32})\epsilon_{30}(b_{30})\epsilon_{31}(b_{33})\epsilon_{32}(b_{29})
\epsilon_{33}(b_{31})\epsilon_{34}(b_{35})\epsilon_{35}(b_{34})\nonumber\\
&\left(\prod_{i=36}^{41} \epsilon_i (a_i)\right) 
\epsilon_{42}(b_4 b_7 + b_{11} b_{12} + b_{22} b_{25} + b_{34} b_{35} + b_{42}) \textup{ for some }a_i\in k. \label{reordered}
\end{alignat}
\noindent Since $q_1$ and $q_2$ centralize $u$, we have 
$
b_1 = \cdots = b_7, \; b_8 = \cdots = b_{14}, \; b_{15} = \cdots = b_{28}, \; b_{29} = \cdots = b_{35}.  
$
Set
$
b_1 = a, \; b_8 = b, \; b_{15} = c, \; b_{29} = d. 
$
Then (\ref{reordered}) simplifies to 
\begin{alignat*}{2}
q_1 u q_1^{-1} & =\prod_{i=1}^{7}\epsilon_i(a)\prod_{i=8}^{14}\epsilon_i(b)\prod_{i=15}^{28}\epsilon_i(c)
\prod_{i=29}^{35}\epsilon_i(d)\left(\prod_{i=36}^{41}\epsilon_i(a_i)\right)\epsilon_{42}(a^2+b^2+c^2+d^2+b_{42}). 
\end{alignat*}
Since $q_1$ centralizes $u$, comparing the arguments of the $\epsilon_{42}$ term on both sides, we must have
\begin{equation*}
b_{42} = a^2 + b^2 + c^2 + d^2 + b_{42},
\end{equation*}
which is equivalent to
$
a + b + c + d = 0.
$
Then we obtain the desired result. 
\end{proof}
\begin{prop}
$K$ acts non-separably on $R_u(P_{\alpha\beta\gamma\delta\epsilon\eta})$.
\end{prop}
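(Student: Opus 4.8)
Throughout write $P := P_{\alpha\beta\gamma\delta\epsilon\eta}$. By definition, to say that $K$ acts non-separably on $R_u(P)$ is to say that the (always valid) inclusion $\mathrm{Lie}\,C_{R_u(P)}(K)\subseteq \mathfrak{c}_{\mathrm{Lie}(R_u(P))}(K)$ is \emph{strict}. Since $C_{R_u(P)}(K)$ is a closed subgroup of the algebraic group $R_u(P)$ and $k$ is algebraically closed (hence perfect), this centralizer is smooth, so $\dim \mathrm{Lie}\,C_{R_u(P)}(K)=\dim C_{R_u(P)}(K)$. Thus the plan is to establish the dimension inequality $\dim C_{R_u(P)}(K) < \dim \mathfrak{c}_{\mathrm{Lie}(R_u(P))}(K)$. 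By Proposition~\ref{inf-E_7} the right-hand side equals $5$, one free scalar for each of the five $K$-orbits $O_1, O_8, O_{15}, O_{29}, O_{36}$. So everything reduces to showing $\dim C_{R_u(P)}(K)\le 4$.

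To bound the group centralizer I would separate $R_u(P)$ into its weight-$1$ and weight-$2$ parts. Let $Z:=\prod_{i=36}^{42}U_i$ be the product of the weight-$2$ root subgroups. Since the sum of two weight-$2$ roots would have $\sigma$-coefficient $4$, and the sum of a weight-$1$ and a weight-$2$ root would have $\sigma$-coefficient $3$, neither is a root of $E_7$; hence $Z$ is an abelian subgroup of $R_u(P)$, central in $R_u(P)$, isomorphic to $\mathbb{G}_a^{7}$, and normalized by $K$. First I would read off from Proposition~\ref{positive proposition} that the image of $C_{R_u(P)}(K)$ under the canonical projection $R_u(P)\to R_u(P)/Z$ onto the weight-$1$ part is parametrized by the three scalars $a,b,c$ alone, the fourth orbit being forced to the value $a+b+c$. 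Consequently this image has dimension at most $3$.

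Next I would control the fibres of that projection. If $u,u'\in C_{R_u(P)}(K)$ have the same weight-$1$ part, then $u^{-1}u'\in Z\cap C_{R_u(P)}(K)=C_Z(K)$. Because $O_{36}=\{36,\dots,42\}$ is a single $K$-orbit and, in characteristic $2$, $K$ acts on the root subgroups inside $Z$ by honest permutations (the signs in~(\ref{n-action on group}) being irrelevant), the fixed subgroup $C_Z(K)$ consists precisely of the diagonal elements $\prod_{i=36}^{42}\epsilon_i(t)$, $t\in k$, and so is $1$-dimensional. Hence every nonempty fibre is a coset of $C_Z(K)$, of dimension $1$, and combining with the previous paragraph yields $\dim C_{R_u(P)}(K)\le 3+1=4<5$, which is exactly the strict inequality sought.

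The conceptual point, and the reason $p=2$ is forced, is that the discrepancy is created by the square term in the $\epsilon_{42}$-coordinate of~(\ref{reordered}): on the group the centralizer condition reads $a^2+b^2+c^2+d^2=0$, i.e.\ $a+b+c+d=0$ in characteristic $2$, which cuts the weight-$1$ contribution from four parameters down to three; at the level of Lie algebras this quadratic relation differentiates to $0$ and imposes no constraint, so all five orbit-parameters of Proposition~\ref{inf-E_7} survive. I expect the only delicate point to be the fibre computation, namely making sure that \emph{all} of $C_{R_u(P)}(K)$ is accounted for rather than just the subgroups visible in Proposition~\ref{positive proposition}; this is handled cleanly by the centrality of $Z$ together with the smoothness of the centralizer that identifies $\dim \mathrm{Lie}\,C_{R_u(P)}(K)$ with $\dim C_{R_u(P)}(K)$.
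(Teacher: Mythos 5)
Your proof is correct, but it takes a genuinely different route from the paper's. Writing $P=P_{\alpha\beta\gamma\delta\epsilon\eta}$, the paper fixes the specific tangent vector $e_1+\cdots+e_7$, which lies in $\mathfrak{c}_{\textup{Lie}(R_u(P))}(K)$ by Proposition~\ref{inf-E_7}, and shows it cannot equal $v'(0)$ for any curve $v$ in $C_{R_u(P)}(K)^{\circ}$: expanding $v(a)=\prod_i\epsilon_i(f_i(a))$ in canonical coordinates and feeding in Proposition~\ref{positive proposition} forces $(a+g_1(a))+g_8(a)+g_{15}(a)=g_{29}(a)$ with every $g_i$ lacking a linear term, which is absurd. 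You instead run a dimension count, $\dim\mathfrak{c}_{\textup{Lie}(R_u(P))}(K)=5$ against $\dim C_{R_u(P)}(K)=\dim\textup{Lie}\,C_{R_u(P)}(K)\le 4$, the bound coming from the projection to the weight-$1$ part (at most three parameters, since Proposition~\ref{positive proposition} pins the fourth orbit to $a+b+c$) together with the $1$-dimensional fibres $C_Z(K)$. Both arguments have Proposition~\ref{positive proposition} as their engine---it is where the characteristic-$2$ relation $a+b+c+d=0$ lives, and that relation has no infinitesimal counterpart---and your auxiliary steps are sound: the reduced centralizer is a group variety over an algebraically closed field, hence smooth, so its dimension equals that of its Lie algebra, and the fibre computation works because $Z$ is central in $R_u(P)$ and $K$ permutes its seven coordinates transitively with trivial signs in characteristic $2$. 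What the paper's version buys is an explicit witness: it isolates the tangent direction $T_1(C_1)$ lying in $\mathfrak{c}_{\textup{Lie}(R_u(P))}(K)$ but not in $\textup{Lie}\,C_{R_u(P)}(K)$, and that particular direction is exactly what Step~2 (the choice of $v(a)\in C_1$, Remark~\ref{choice of v}) and Sections~4--5 go on to exploit; your count is shorter and avoids the curve-lifting step, but on its own it does not identify which directions fail to lift, so the rest of the construction would still need that information extracted separately.
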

\begin{proof}
In view of Proposition~\ref{inf-E_7}, it suffices to show that $e_1+e_2+e_3+e_4+e_5+e_6+e_7\not\in \textup{Lie}\,C_{R_u(P_\lambda)}(K)$. Suppose the contrary. Since by \cite[Cor.~14.2.7]{Springer} $C_{R_u(P_\lambda)}(K)^{\circ}$ is isomorphic as a variety to $k^n$ for some $n\in \mathbb{N}$, there exists a morphism of varieties $v: k\rightarrow C_{R_u(P_\lambda)}(K)^{\circ}$ such that $v(0)=1$ and $v'(0)=e_1+e_2+e_3+e_4+e_5+e_6+e_7$. By Lemma~\ref{unipotent radical lemma}, $v(a)$ can be expressed uniquely as
$
v(a) = \prod_{i=1}^{42}\epsilon_{i}(f_i(a)) 
\text{ for some } f_i \in k[X].
$
 Differentiating the last equation, and evaluating at $a=0$, we obtain
$
v'(0) = \sum_{i\in\{1,\cdots,42\}} (f_i)'(0) e_i.
$
Since $v'(0) = \sum_{i\in O_1}e_i$, we have
\begin{gather*}
(f_i)'(0)=
\begin{cases}
1 &\text{ if } i\in O_1, \\
0 &\text{ otherwise. } 
\end{cases}
\end{gather*}
Then we have
\begin{gather*}
f_i(a) =
\begin{cases}
a + g_i(a) & \text{ if } i\in O_1, \\
g_i(a) & \text{ otherwise, }\\
\end{cases}\\
\text { where } g_i \in k[X] \text{ has no constant or linear term}.\\
\end{gather*}
Then from Proposition~\ref{positive proposition}, we obtain 
$
(a+g_1(a))+g_8(a)+g_{15}(a)=g_{29}(a).
$
This is a contradiction. 
\end{proof}

\subsection{Step~2}
Let
$
C_1:= \left\{\prod_{i=1}^7 \epsilon_i(a) \mid a\in k \right\}  
$
, pick any $a\in k^*$, and let $v(a):=\prod_{i=1}^7 \epsilon_i(a)$. Now, set 
\begin{alignat*}{2}
H :=& v(a) K v(a)^{-1} = \langle q_1 \epsilon_{40}(a^2)\epsilon_{41}(a^2)\epsilon_{42}(a^2), q_2 \epsilon_{36}(a^2)\epsilon_{39}(a^2) \rangle,\\
M :=& \langle L_{\alpha\beta\gamma\delta\epsilon\eta}, G_{36},\cdots, G_{42} \rangle. 
\end{alignat*}
\begin{rem}\label{choice of v}
By Proposition~\ref{inf-E_7} and Proposition~\ref{positive proposition}, the tangent space of $C_1$ at the identity, $T_1(C_1)$, is contained in $\mathfrak{c}_{\textup{Lie}(R_u(P_{\alpha\beta\gamma\delta\epsilon\eta}))}(K)$ but not contained in $\textup{Lie}(C_{R_u(P_{\alpha\beta\gamma\delta\epsilon\eta})}(K))$. The element $v(a)$ can be any non-trivial element in $C_1$. 
\end{rem}
\begin{rem}\label{E7M}
In this case $\sigma$ is the unique simple root not contained in $\Psi(L_{\alpha\beta\gamma\delta\epsilon\eta})$. $M$ was chosen so that $M$ is generated by a Levi subgroup $L_{\alpha\beta\gamma\delta\epsilon\eta}$ containing $K$ and all root subgroups of $\sigma$-weight $2$.
\end{rem}
We have 
$
H \subset M, H\not\subset L_{\alpha\beta\gamma\delta\epsilon\eta}.
$
Note that 
$
\Psi(M) = \{ \pm 36, \cdots, \pm 63 \}.
$
Since $M$ is generated by all root subgroups of even $\sigma$-weight, it is easy to see that $\Psi(M)$ is a closed subsystem of $\Psi(G)$, thus $M$ is reductive by~\cite[Lem.~3.9]{Ben1}. Note that $M$ is of type $A_7$.  
\begin{prop}\label{H is not M-cr}
$H$ is not $M$-cr. 
\end{prop}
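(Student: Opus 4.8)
The plan is to prove that $H$ is not $M$-cr by exhibiting a parabolic subgroup of $M$ that contains $H$ but no Levi subgroup of which contains $H$, and to do this by a contrapositive application of Proposition~\ref{GIT}. First I would locate a cocharacter $\lambda\in Y(M)$ whose associated parabolic $P_\lambda(M)$ contains $H$. The natural candidate is a cocharacter associated to the simple root $\sigma$, or more precisely one for which $L_\lambda(M)=L_{\alpha\beta\gamma\delta\epsilon\eta}$ (note this Levi lies in $M$ since $\Psi(M)$ contains the weight-$2$ roots $\{36,\dots,42\}$ and their negatives, while the weight-$1$ simple structure of type $A_6$ sits inside $M$ as well). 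Since $K\subseteq L_{\alpha\beta\gamma\delta\epsilon\eta}\subseteq P_\lambda(M)$ and $v(a)\in R_u(P_\lambda(M))$ (because $v(a)$ is a product of positive root group elements of roots in $O_1\subseteq\Psi(R_u(P_\lambda(G)))\cap M$), we get $H=v(a)Kv(a)^{-1}\subseteq P_\lambda(M)$, verifying the hypothesis $H\subseteq P_\lambda$ of Proposition~\ref{GIT} with $G$ replaced by $M$.

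The heart of the argument is then to show that no $v'\in R_u(P_\lambda(M))$ conjugates $H$ into $L_\lambda(M)$ in the way Proposition~\ref{GIT} would force if $H$ were $M$-cr. Concretely, if $H$ were $M$-cr, Proposition~\ref{GIT} (applied to $M$) would produce $w\in R_u(P_\lambda(M))$ with $c_\lambda(h)=whw^{-1}$ for all $h\in H$. Since $c_\lambda$ kills the $R_u$-part, we have $c_\lambda(h)\in L_{\alpha\beta\gamma\delta\epsilon\eta}$, and in fact $c_\lambda(H)=K$ because $c_\lambda(v(a)kv(a)^{-1})=c_\lambda(v(a))\,k\,c_\lambda(v(a))^{-1}=k$ as $v(a)\in R_u(P_\lambda)$ projects to the identity. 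Thus $w$ would satisfy $wHw^{-1}=K$, i.e. $wv(a)$ would centralize $K$: writing $u:=wv(a)$, the relation $ukv(a)^{-1}w^{-1}=k$ for all generators forces $u\in C_M(K)$, and since $u$ lies in $R_u(P_\lambda(M))=R_u(P_\lambda(G))\cap M$, we would obtain an element $u\in C_{R_u(P_\lambda(G))}(K)\cap M$ whose $O_1$-component (the coefficient on $\sum_{i\in O_1}\epsilon_i$) equals $a\neq 0$.

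The contradiction comes from combining Proposition~\ref{positive proposition} with the constraint that $u$ must lie in $M$. By Proposition~\ref{positive proposition}, any element of $C_{R_u(P_{\alpha\beta\gamma\delta\epsilon\eta})}(K)$ has its weight-$1$ coefficients on the orbits $O_1,O_8,O_{15},O_{29}$ equal to $a,b,c,a+b+c$ respectively; but membership of $u$ in $M$ forces the weight-$1$ part to vanish entirely, since $\Psi(M)=\{\pm36,\dots,\pm63\}$ contains \emph{no} weight-$1$ roots. Hence $a=b=c=0$ for any centralizing element available inside $M$, contradicting $a\neq0$. Therefore no such $w$ exists, and by the contrapositive of Proposition~\ref{GIT}, $H$ is not $M$-cr.

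The main obstacle I anticipate is the bookkeeping in the middle step: pinning down exactly which cocharacter $\lambda$ gives $L_\lambda(M)=L_{\alpha\beta\gamma\delta\epsilon\eta}$ and confirming that the generators of $H$, as written in the displayed expression $H=\langle q_1\epsilon_{40}(a^2)\epsilon_{41}(a^2)\epsilon_{42}(a^2),\,q_2\epsilon_{36}(a^2)\epsilon_{39}(a^2)\rangle$, genuinely project under $c_\lambda$ back to $q_1,q_2$ (i.e.\ that the correction terms $\epsilon_{36},\epsilon_{39},\epsilon_{40},\epsilon_{41},\epsilon_{42}$ all lie in $R_u(P_\lambda)$ and are killed by $c_\lambda$). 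Once the projection behaves as expected, the decisive input is the \emph{absence of weight-$1$ roots in $M$}, which is precisely what forces the nonzero $O_1$-coefficient $a$ out of reach — this is the clean conceptual reason the example works, and it is why $M$ was built from only the even $\sigma$-weight root subgroups (Remark~\ref{E7M}).
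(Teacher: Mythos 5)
Your overall strategy is the paper's: take the cocharacter $\lambda$ with $L_\lambda=L_{\alpha\beta\gamma\delta\epsilon\eta}$ and $P_\lambda=P_{\alpha\beta\gamma\delta\epsilon\eta}$, note $c_\lambda(H)=K$, apply Proposition~\ref{GIT} inside $M$ to get $w\in R_u(P_\lambda(M))$ with $wHw^{-1}=K$, and deduce that $u:=wv(a)$ must lie in $C_{R_u(P_\lambda)}(K)$. Up to that point you match the paper. But your final step contains a genuine error: you assert that $u=wv(a)$ lies in $R_u(P_\lambda(M))=R_u(P_\lambda(G))\cap M$, and then derive the contradiction from ``membership of $u$ in $M$.'' This is false. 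Only $w$ lies in $M$; the element $v(a)=\prod_{i=1}^{7}\epsilon_i(a)$ is supported on the weight-$1$ roots $O_1$, which are \emph{not} in $\Psi(M)$, so $u\notin M$. Indeed you simultaneously claim that $u\in M$ and that $u$ has nonzero $O_1$-component, which is already inconsistent; the ``contradiction'' you reach ($a=0$ versus $a\neq 0$) comes from this internal inconsistency rather than from the assumption that $H$ is $M$-cr, so the argument as written does not establish anything.

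The correct contradiction — and the one the paper uses — does not come from forcing the weight-$1$ part of $u$ to vanish, but from the \emph{linear relation among orbit coefficients} in Proposition~\ref{positive proposition}. Since $w\in R_u(P_\lambda(M))$ is supported on $\{36,\dots,42\}$, the element $u=wv(a)$ has coefficients $a,0,0,0$ on the weight-$1$ orbits $O_1,O_8,O_{15},O_{29}$ respectively. Proposition~\ref{positive proposition} says that any element of $C_{R_u(P_\lambda)}(K)$ has $O_{29}$-coefficient equal to the \emph{sum} of its $O_1$-, $O_8$-, and $O_{15}$-coefficients; here that would force $0=a+0+0$, contradicting $a\neq 0$. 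You quote this relation in passing but never use it, relying instead on the false membership claim. This also means your closing diagnosis (``the decisive input is the absence of weight-$1$ roots in $M$'') is off: the absence of weight-$1$ roots in $M$ only constrains $w$, and the decisive input is the constraint $d=a+b+c$ of Proposition~\ref{positive proposition}, i.e.\ precisely the non-separability of the $K$-action on $R_u(P_\lambda)$.
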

\begin{proof}
Let 
$
\lambda = 3\alpha^{\vee}+6\beta^{\vee}+9\gamma^{\vee}+12\delta^{\vee}+8\epsilon^{\vee}
+4\eta^{\vee}+7\sigma^{\vee}.
$
We have
\begin{alignat*}{4}
\langle \alpha, \lambda \rangle &= 0, \langle \beta, \lambda \rangle &= 0, \langle \gamma, \lambda \rangle &= 0, \langle \delta, \lambda \rangle &= 0, \\ \langle \epsilon, \lambda \rangle &= 0, \langle \eta, \lambda \rangle &= 0, \langle \sigma, \lambda \rangle &= 2.  
\end{alignat*}
So
$
L_{\alpha\beta\gamma\delta\epsilon\eta} = L_\lambda, \, P_{\alpha\beta\gamma\delta\epsilon\eta} = P_\lambda.
$

It is easy to see that $L_\lambda$ is of type $A_6$, so $[L_\lambda, L_\lambda]$ is isomorphic to either $SL_7$ or $PGL_7$. We rule out the latter. Pick $x\in k^{*}$ such that $x\ne 1, x^7=1$. Then $\lambda(x)\ne 1$ since $\sigma(\lambda(x))=x^2\ne 1$. Also, we have $\lambda(x)\in Z([L_\lambda, L_\lambda])$. Therefore $[L_\lambda, L_\lambda]\cong SL_7$. It is easy to check that the map $k^{*}\times [L_\lambda, L_\lambda]\rightarrow L_\lambda$ is separable, so we have $L_\lambda\cong GL_7$.

Let $c_\lambda : P_\lambda \rightarrow L_\lambda$ be the homomorphism as in Definition~\ref{homomorphism}. 
In order to prove that $H$ is not $M$-cr, by Theorem~\ref{GIT} it suffices to find a tuple $(h_1, h_2)\in H^2$ which is not  $R_u\left(P_\lambda(M)\right)$-conjugate to $c_\lambda\left((h_1, h_2)\right)$. Set 
$
h_1 := v(a) q_1 v(a)^{-1}, \; h_2 := v(a) q_2 v(a)^{-1}. 
$
Then
\begin{alignat*}{2}
c_\lambda\left((h_1, h_2)\right) &= \lim_{x\rightarrow 0}\left(\lambda(x)q_1 \epsilon_{40}(a^2)\epsilon_{41}(a^2)\epsilon_{42}(a^2)\lambda(x)^{-1}, \; (\lambda(x)q_2 \epsilon_{36}(a^2)\epsilon_{39}(a^2)\lambda(x)^{-1}\right)\\
&=(q_1, q_2).
\end{alignat*}   
Now suppose that $(h_1, h_2)$ is $R_u\left(P_\lambda(M)\right)$-conjugate to $c_\lambda\left((h_1, h_2)\right)$. Then there exists $m\in R_u\left(P_\lambda(M)\right)$ such that
\begin{equation*}
 m v(a) q_1 v(a)^{-1}m^{-1} = q_1, \,
 m v(a) q_2 v(a)^{-1}m^{-1} = q_2. \label{m-equation}
\end{equation*}
Thus we have
$
m v(a) \in C_{R_u(P_\lambda)}(K). 
$
Note that 
$
\Psi\left(R_u\left(P_\lambda(M)\right)\right) = \{ 36, \cdots, 42 \}. 
$
So, by Lemma~\ref{unipotent radical lemma}, $m$ can be expressed uniquely as 
$
m := \prod_{i=36}^{42} \epsilon_i(a_i) \textup{ for some }a_i\in k.
$
Then we have
\begin{equation*}\label{last equation}
 m v(a) = \epsilon_1(a) \epsilon_2(a) \epsilon_3(a) \epsilon_4(a) \epsilon_5(a) \epsilon_6(a) \epsilon_7(a)\left( \prod_{i=36}^{42} \epsilon_i(a_i)\right) \in C_{R_u(P_\lambda)}(K). 
\end{equation*}
This contradicts Proposition~\ref{positive proposition}.
\end{proof}
\begin{rem}\label{reason}
In \cite[Sec.~7, Prop~.7.17]{Ben1}, Bate et al.~used  \cite[Lem.~2.17, Thm.~3.1]{Ben2} to turn a problem on $M$-complete reducibility into a problem involving $M$-conjugacy. We have used Proposition~\ref{GIT} to turn the same problem into a problem involving $R_u(P\cap M)$-conjugacy, which is easier.
\end{rem}
\begin{rem}
Instead of using $C_1$ to define $v(a)$, we can take $C_8:= \left\{\prod_{i=8}^{14} \epsilon_i(a) \mid a\in k \right\}$, $C_{15}:= \left\{\prod_{i=15}^{28} \epsilon_i(a) \mid a\in k \right\}$, or $C_{29}:= \left\{\prod_{i=29}^{35} \epsilon_i(a) \mid a\in k \right\}$. In each case, a similar argument goes through and gives rise to a different example with the desired property.
\end{rem}
\subsection{Step~3}
\begin{prop}\label{G-cr prop}
$H$ is $G$-cr.
\end{prop}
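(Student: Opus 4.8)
The plan is to show that $H$ is $G$-cr by proving that $K$ is $G$-cr and that $H$ is $G$-conjugate to $K$, since complete reducibility is preserved under $G$-conjugacy. Recall $H = v(a) K v(a)^{-1}$ with $v(a) \in R_u(P_{\alpha\beta\gamma\delta\epsilon\eta})$, so $H$ and $K$ are already $G$-conjugate by construction; hence it suffices to establish that $K$ itself is $G$-cr. The group $K \cong D_{14}$ is generated by the elements $q_1, q_2$, which lie in the subgroup $W \cong N_G(T)/T$ generated by the $n_\xi$, and in particular $K$ normalizes the maximal torus $T$.

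**The key observation** I would exploit is that $K$ is a finite subgroup of $N_G(T)$ that normalizes $T$, and a subgroup containing (or closely tied to) a maximal torus is typically $G$-cr. More precisely, the standard route is to use the criterion that a subgroup normalizing a maximal torus $T$ of $G$ is $G$-cr: if $K \leq N_G(T)$, then $K \cdot T$ is $G$-cr because any parabolic $P$ containing $K T$ contains $T$, and a Levi subgroup of $P$ containing $T$ can be chosen to contain $K$ as well. The cleanest formulation is that $\langle K, T\rangle$ lies in $N_G(T)$, and $N_G(T)$ (being $G$-cr, as it contains a maximal torus and meets no proper parabolic essentially) forces $K$ into a Levi whenever $KT$ lands in a parabolic. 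I would argue: suppose $K \subseteq P_\mu$ for some cocharacter $\mu$; since $K$ normalizes $T$ and $T \subseteq L_\mu$ can be arranged, one shows $K \subseteq L_{\mu'}$ for a suitable $G$-conjugate. Alternatively, I would invoke the well-known result (e.g. from the GIT framework of Bate–Martin–Röhrle) that any subgroup of $G$ normalized by a maximal torus, or containing a maximal torus of the ambient group, is $G$-cr.

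**The cleanest concrete approach** is: since $K \leq \langle n_\alpha, \dots, n_\eta \rangle \leq N_G(T)$, and $T$ normalizes $K$ (because $K$ permutes the $T$-weight data via the Weyl action), the group $\langle K, T \rangle$ is a subgroup of $N_G(T)$. One then checks that $C_G(K)^\circ \cdot K$ or the reductive closure of $K$ contains a maximal torus, and applies the fact that a subgroup whose connected centralizer contains a regular torus, or which together with a maximal torus generates a group meeting every containing parabolic's Levi, is $G$-cr. I would conclude $K$ is $G$-cr, and therefore so is its conjugate $H$.

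**The main obstacle** will be pinning down the precise lemma that guarantees $G$-complete reducibility from the $N_G(T)$ condition and verifying its hypotheses for $K$, rather than the conjugacy step, which is immediate from $H = v(a)Kv(a)^{-1}$. In particular I would need to confirm that $K$ does indeed normalize $T$ (equivalently, that $q_1, q_2 \in N_G(T)$, which follows since each $n_\xi \in N_G(T)$) and then cite or adapt the standard criterion that a subgroup normalized by a maximal torus is $G$-cr. I expect this final step to be genuinely short once the correct criterion is identified, matching the paper's own remark that ``Step~3 is easy.''
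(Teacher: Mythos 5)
Your reduction from $H$ to $K$ via $H = v(a)Kv(a)^{-1}$ is correct and is exactly the paper's first step, but the key lemma you then want to invoke does not exist: it is false that a subgroup of $N_G(T)$, or a subgroup normalizing a maximal torus, is automatically $G$-cr. In characteristic $2$ the standard representatives satisfy $n_\xi^2=\xi^{\vee}(-1)=1$, so each $n_\xi$ is a unipotent involution; hence already $\langle n_\alpha\rangle\leq N_G(T)$ is a nontrivial unipotent subgroup and therefore not $G$-cr (it is not even reductive, cf.\ \cite[Prop.~4.1]{Serre-building}). So membership in $\langle n_\xi : \xi\in\Psi(G)\rangle\leq N_G(T)$ gives nothing for free --- indeed the whole point of the paper is that such subgroups can behave pathologically (non-separably). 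Your fallback arguments do not close the gap either: the observation that $\langle K,T\rangle$ lands in a Levi of any parabolic containing it only controls parabolics that contain $T$, whereas $G$-complete reducibility of $K$ requires handling \emph{every} parabolic containing $K$, most of which do not contain $T$; and you cannot descend from $\langle K,T\rangle$ to $K$ by normality, because $T$ does not normalize $K$ (conjugating $n_\xi$ by $t\in T$ yields $n_\xi$ times a generally nontrivial element of $T$). Finally, the criterion ``$C_G(K)^{\circ}$ contains a regular torus'' would force $K$ to lie in a maximal torus, which it does not.

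The step that is actually easy is different. Since $K$ lies in the Levi subgroup $L_\lambda\cong GL_7$, by \cite[Lem.~2.12, Cor.~3.22]{Ben2} it suffices to show that $K$ is $[L_\lambda,L_\lambda]$-cr with $[L_\lambda,L_\lambda]\cong SL_7$, i.e.\ that the $7$-dimensional module of $K$ is semisimple. Identifying $K\cong D_{14}$ with its image under the permutation representation $S_7\to SL_7(k)$, the module $k^7$ decomposes in characteristic $2$ as the trivial module plus three irreducible $2$-dimensional modules, hence is semisimple. That is the paper's proof, and it is the argument you need in place of the $N_G(T)$ criterion.
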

\begin{proof}
First note that $H$ is conjugate to $K$, so $H$ is $G$-cr if and only if $K$ is $G$-cr. Then, 
by \cite[Lem.~2.12,~Cor.~3.22]{Ben2}, it suffices to show that $K$ is $[L_{\lambda},L_{\lambda}]$-cr. 
We can identify $K$ with the image of the corresponding subgroup of $S_7$ under the permutation representation $\pi_1: S_7 \rightarrow SL_7(k)$. 
It is easy to see that $K\cong D_{14}$. A quick calculation shows that this representation of $D_{14}$ is a direct sum of a trivial $1$-dimensional and $3$ irreducible $2$-dimensional subrepresentations. Therefore $K$ is $[L_{\lambda},L_{\lambda}]$-cr. 
\end{proof}  
\section{A rationality problem}
We prove Theorem~\ref{rationality}. The key here is again the existence of a $1$-dimensional curve $C_1$ such that $T_1(C_1)$ is contained in $\mathfrak{c}_{\textup{Lie}\,({R_u(P_\lambda)})}(K)$ but not contained in  $\textup{Lie}(C_{R_u(P_\lambda)}(K))$. The same phenomenon was seen in the $G_2$ example.

\begin{proof}[Proof of Theorem~\ref{rationality}]
Let $k_0$, $k$, and $G$ be as in the hypothesis. We choose a $k_0$-defined $k_0$-split maximal torus $T$ such that for each $\zeta\in\Psi(G)$ the corresponding root $\zeta$, coroot $\zeta^{\vee}$, and homomorphism $\epsilon_\zeta$ are defined over $k_0$. Since $k_0$ is not perfect, there exists $\tilde{a}\in k\backslash k_0$ such that $\tilde{a}^2\in k_0$. We keep the notation $q_1, q_2, v, K, P_\lambda, L_\lambda$ of Section~3. Let
\begin{alignat*}{2}
H &= \langle v(\tilde{a}) q_1 v(\tilde{a})^{-1}, v(\tilde{a}) q_2 v(\tilde{a})^{-1} \rangle \\
       &= \langle q_1 \epsilon_{40}(\tilde{a}^2)\epsilon_{41}(\tilde{a}^2)\epsilon_{42}(\tilde{a}^2), q_2 \epsilon_{36}(\tilde{a}^2)\epsilon_{39}(\tilde{a}^2) \rangle.  
\end{alignat*}
Now it is obvious that $H$ is $k_0$-defined.
We already know that $H$ is $G$-cr by Proposition~\ref{G-cr prop}. Since $G$ and $T$ are $k_0$-split, $P_\lambda$ and 
$L_\lambda$ are $k_0$-defined by~\cite[\Rmnum{5}.20.4, \Rmnum{5}.20.5]{Borel}. Suppose that there exists a $k_0$-Levi subgroup $L'$ of $P_\lambda$ such that $L'$ contains $H$. Then there exists $w\in R_u(P_\lambda)(k_0)$ such that $L' = w L_\lambda w^{-1}$ by~\cite[\Rmnum{5}.20.5]{Borel}. Then $w^{-1} H w \subseteq L_\lambda$ and $v(\tilde{a})^{-1}H v(\tilde{a})
\subseteq L_\lambda$. So we have $c_\lambda(w^{-1} h w) = w^{-1} h w$ and $c_\lambda\left(v(\tilde{a})^{-1} h v(\tilde{a})\right) = v(\tilde{a})^{-1} h v(\tilde{a})$ for any $h\in H$. We also have $c_\lambda(w) = c_\lambda\left(v(\tilde{a})\right)=1$ since $w, v(\tilde{a})\in R_u(P_\lambda)(k)$. 
Therefore we obtain
$
w^{-1}h w = c_\lambda(w^{-1} h w) = c_\lambda(h) = c_\lambda\left(v(\tilde{a})^{-1} h v(\tilde{a})\right) = v(\tilde{a})^{-1} h v(\tilde{a}) 
$
for any $h\in H$. So we have 
$
w = v(\tilde{a}) z \textup{ for some } z\in C_{R_u(P_\lambda)}(K)(k).
$
By Proposition~\ref{positive proposition}, $z$ must have the form
\begin{equation*}
z = \prod_{i=1}^7 \epsilon_i(a) \prod_{i=8}^{14} \epsilon_i(b) \prod_{i=15}^{28}\epsilon_i(c) 
\prod_{i=29}^{35}\epsilon_i(a+b+c) \prod_{i=36}^{42} \epsilon_i(a_i) \textup{ for some } a, b, c, a_i \in k.
\end{equation*}
Then  
\begin{alignat*}{2}
w &= \left( \prod_{i=1}^{7} \epsilon_i(\tilde{a}) \right)\prod_{i=1}^7 \epsilon_i(a) \prod_{i=8}^{14} \epsilon_i(b) \prod_{i=15}^{28}\epsilon_i(c) 
\prod_{i=29}^{35}\epsilon_i(a+b+c) \prod_{i=36}^{42} \epsilon_i(a_i)\\
       &= \prod_{i=1}^{7} \epsilon_i(\tilde{a}+a) \prod_{i=8}^{14} \epsilon_i(b) \prod_{i=15}^{28}\epsilon_i(c) 
\prod_{i=29}^{35}\epsilon_i(a+b+c) \prod_{i=36}^{42} \epsilon_i(b_i) \textup{ for some } b_i\in k. 
\end{alignat*}
Since $w$ is a $k_0$-point, $b$, $c$, and $a+b+c$ all belong to $k_0$, so $a\in k_0$. But $a+\tilde{a}$ belongs to $k_0$ as well, so $\tilde{a}\in k_0$. This is a contradiction.  
\end{proof}
\begin{rem}
As in Section~3, we can take $v(\tilde{a})$ from $C_8$, $C_{15}$, or $C_{29}$. In each case, a similar argument goes through, and gives rise to a different example. 
\end{rem} 
\begin{rem}
\cite[Ex.~5.11]{Ben2} shows that there is a $k_0$-defined subgroup of $G$ of type $A_n$ which is not $G$-cr over $k$ even though it is $G$-cr over $k_0$. Note that this example works for any $p>0$.
\end{rem}
\section{A problem of conjugacy classes}
We prove Theorem~\ref{conjugacy}. Here, the key is again the existence of a $1$-dimensional curve $C_1$ such that $T_1(C_1)$ is contained in $\mathfrak{c}_{\textup{Lie}\,({R_u(P_\lambda)})}(K)$ but not contained in  $\textup{Lie}(C_{R_u(P_\lambda)}(K))$ as in the $G_2$ example. Let $G$, $M$, $k$ be as in the hypotheses of the theorem. We keep the notation $q_1, q_2, v, K,  P_\lambda, L_\lambda$ of Section~3. A calculation using the commutator relations (Lemma~\ref{commutation lemma}) shows that
\begin{equation*}
Z(R_u(P_\lambda))=\langle U_{36}, U_{37}, U_{38}, U_{39}, U_{40}, U_{41}, U_{42} \rangle.
\end{equation*}
Let 
$
K_0: = \langle K, Z(R_u(P_\lambda)) \rangle. 
$
It is standard that there exists a finite subset $F=\{z_1, z_2, \cdots, z_{n'}\}$ of $Z(R_u(P))$ such that
$
C_{P_\lambda}(\langle K, F \rangle) = C_{P_\lambda}(K_0).
$
Let
$
\bold{m}:= (q_1, q_2, z_1, \cdots, z_{n'}).
$
Let $n:=n'+2$. For every $x\in k^*$, define
$
\bold{m}(x):= v(x)\cdot \bold{m}\in P_\lambda(M)^{n}.
$

\begin{lem}\label{sizeofthecentralizerofLevi}
$C_{P_\lambda}(K_0)=C_{R_u(P_{\lambda})}(K_0)$.
\end{lem}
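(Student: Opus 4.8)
The plan is to establish the nontrivial inclusion $C_{P_\lambda}(K_0)\subseteq R_u(P_\lambda)$; the reverse containment $C_{R_u(P_\lambda)}(K_0)\subseteq C_{P_\lambda}(K_0)$ holds trivially since $R_u(P_\lambda)\subseteq P_\lambda$. Set $Z_0:=Z(R_u(P_\lambda))=\langle U_{36},\dots,U_{42}\rangle$, the group generated by the weight-$2$ root subgroups, so that $Z_0\subseteq K_0$. Given $g\in C_{P_\lambda}(K_0)$, I would write $g=lu$ via the Levi decomposition $P_\lambda=L_\lambda\ltimes R_u(P_\lambda)$, with $l=c_\lambda(g)\in L_\lambda$ and $u\in R_u(P_\lambda)$. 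It then suffices to prove $l=1$: for then $g=u\in R_u(P_\lambda)$, and since $g$ centralizes $K_0$ we obtain $g\in C_{R_u(P_\lambda)}(K_0)$, as wanted.

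First I would transfer the centralizing condition to the Levi factor. Because $Z_0$ is central in $R_u(P_\lambda)$ and $u\in R_u(P_\lambda)$, we have $uzu^{-1}=z$ for all $z\in Z_0$, whence $gzg^{-1}=l(uzu^{-1})l^{-1}=lzl^{-1}$. Since $g$ centralizes $K_0\supseteq Z_0$, this forces $lzl^{-1}=z$ for every $z\in Z_0$, i.e. $l\in C_{L_\lambda}(Z_0)$. Thus the whole lemma reduces to the single assertion $C_{L_\lambda}(Z_0)=\{1\}$, that is, to the faithfulness of the action of $L_\lambda$ on the $7$-dimensional weight-$2$ module $Z_0$. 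Notice that $K$ itself plays no role in this reduction; the containment $Z_0\subseteq K_0$ already does all the work.

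To prove faithfulness, recall from the proof of Proposition~\ref{H is not M-cr} that $L_\lambda\cong GL_7$ with $[L_\lambda,L_\lambda]\cong SL_7$, and that the weight-$2$ roots $\{36,\dots,42\}$ form a single $W(L_\lambda)$-orbit, so $Z_0$ is irreducible as an $[L_\lambda,L_\lambda]$-module. Let $N$ be the kernel of the representation $L_\lambda\to GL(Z_0)$. The kernel of its restriction to $SL_7$ is a proper normal subgroup of $SL_7$, hence central; but $Z(SL_7)$, the group of scalar $7$-th roots of unity, acts on $Z_0$ by nontrivial scalars (as $Z_0$ is the natural or dual natural module and $p=2\neq 7$), so $N\cap[L_\lambda,L_\lambda]=\{1\}$. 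As $N$ is normal, $[N,[L_\lambda,L_\lambda]]\subseteq N\cap[L_\lambda,L_\lambda]=\{1\}$, giving $N\subseteq C_{L_\lambda}([L_\lambda,L_\lambda])=Z(L_\lambda)$, the group of scalars, which is the connected torus $k^*$. On this torus the action on $Z_0$ is through a single character $\psi$, and since each weight-$2$ root pairs with $\lambda$ to $2$ we get $\langle\psi,\lambda\rangle=2$. A short computation in the rank-one character lattice of $Z(L_\lambda)$ then shows that $\ker\psi$ consists of the elements $x\in k^*$ with $x^2=1$ (or $x=1$), which is trivial because $p=2$. Hence $N=\{1\}$ and $C_{L_\lambda}(Z_0)=\{1\}$, completing the argument.

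The genuinely delicate step is the faithfulness claim $C_{L_\lambda}(Z_0)=\{1\}$. Pushing $N$ into the centre of $L_\lambda$ is formal, but the final torus computation is where the hypothesis $p=2$ is indispensable: the character $\psi$ satisfies $\langle\psi,\lambda\rangle=2$, and only in characteristic $2$ is $\{x\in k^*:x^2=1\}$ trivial. In odd characteristic this kernel would contribute a nontrivial central involution, and the asserted equality could fail; this is consistent with the fact that the entire construction is special to $p=2$.
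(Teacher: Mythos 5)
Your proof is correct and follows essentially the same route as the paper's: both reduce the problem to showing that a Levi component centralizing $Z(R_u(P_\lambda))$ must be trivial, push that element into $Z(L_\lambda)$ using the structure of normal subgroups of $[L_\lambda,L_\lambda]\cong SL_7$ inside $L_\lambda\cong GL_7$, and finish with the pairing of the weight-$2$ roots against $\lambda$. One small numerical correction: that pairing is $\langle i,\lambda\rangle=4$ for $i\in\{36,\dots,42\}$ (each weight-$2$ root has $\sigma$-coefficient $2$ and $\langle\sigma,\lambda\rangle=2$), not $2$; this does not affect your conclusion, since $x^4=1$ still forces $x=1$ in characteristic $2$.
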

\begin{proof}
It is obvious that $C_{R_u(P_{\lambda})}(K_0)\subseteq C_{P_\lambda}(K_0)$. We prove the converse. Let $lu\in C_{P_\lambda}(K_0)$ for some $l\in L_\lambda$ and $u\in R_u(P_\lambda)$. Then $lu$ centralizes $Z(R_u(P_\lambda))$, so $l$ centralizes $Z(R_u(P_\lambda))$, since $u$ does. It suffices to show that $l=1$. Let $l=t \tilde{l}$ where $t\in Z(L_\lambda)^{\circ}=\lambda(k^*)$ and $\tilde{l}\in [L_\lambda, L_\lambda]$. We have
\begin{equation}\label{nontrivialactiononZ}
\langle i, \lambda \rangle = 4 \textup{ for any } i\in \{36,\cdots, 42\}.
\end{equation}
So for any $z\in Z(R_u(P_\lambda))$, there exists $\alpha\in k^{*}$ such that $t\cdot z = \alpha z$. Then we have $\tilde{l}\cdot z = \alpha^{-1} z$. Now define
$
A:=\{\tilde{l}\in [L_\lambda, L_\lambda] \mid \tilde{l} \textup{ acts on }Z(R_u(P_\lambda)) \textup{ by multiplication by a scalar}\}.
$
Then it is easy to see that $A\mathrel{\unlhd} [L_\lambda, L_\lambda]$. Since $[L_\lambda, L_\lambda]\cong SL_7$ and $L_\lambda\cong GL_7$, we have $A=Z([L_\lambda, L_\lambda])$. Therefore we obtain $\tilde{l}\in A=Z([L_\lambda, L_\lambda]) \subseteq \lambda(k^{*})$. So we have $l=c\tilde{l}\in \lambda(k^{*})$. Then we obtain $l\in C_{\lambda(k^{*})}\left(Z(R_u(P_\lambda))\right)$. By (\ref{nontrivialactiononZ}) this implies $l=1$.
\end{proof}

\begin{lem}\label{Pclasses}
$G\cdot \bold{m}\cap P_\lambda(M)^{n}$ is an infinite union of $P_\lambda(M)$-conjugacy classes.
\end{lem}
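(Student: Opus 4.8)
The plan is to locate the explicit infinite family $\{\mathbf{m}(x)\}_{x\in k^*}$ inside $G\cdot\mathbf{m}\cap P_\lambda(M)^n$ and to show that distinct parameters give distinct $P_\lambda(M)$-conjugacy classes. First I would verify that each $\mathbf{m}(x)$ really lies in the set. By construction $\mathbf{m}(x)=v(x)\cdot\mathbf{m}$ is a $G$-conjugate of $\mathbf{m}$, so it lies in $G\cdot\mathbf{m}$; and a direct computation, exactly as in Step~2, shows $v(x)q_1v(x)^{-1}=q_1\epsilon_{40}(x^2)\epsilon_{41}(x^2)\epsilon_{42}(x^2)$ and $v(x)q_2v(x)^{-1}=q_2\epsilon_{36}(x^2)\epsilon_{39}(x^2)$, both of which lie in $L_\lambda\cdot Z(R_u(P_\lambda))\subseteq P_\lambda(M)$, while $v(x)z_jv(x)^{-1}=z_j$ since each $z_j$ is central in $R_u(P_\lambda)$. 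Hence $\mathbf{m}(x)\in P_\lambda(M)^n$ for every $x\in k^*$.

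The heart of the argument is to prove that $\mathbf{m}(x)$ and $\mathbf{m}(y)$ are $P_\lambda(M)$-conjugate only if $x=y$. Suppose $g\in P_\lambda(M)$ satisfies $g\cdot\mathbf{m}(x)=\mathbf{m}(y)$. Reading this componentwise on $q_1,q_2,z_1,\dots,z_{n'}$, the element $w:=v(y)^{-1}gv(x)$ centralizes each of these, so $w\in C_{P_\lambda}(\langle K,F\rangle)=C_{P_\lambda}(K_0)$ by the choice of $F$. Lemma~\ref{sizeofthecentralizerofLevi} then forces $w\in C_{R_u(P_\lambda)}(K_0)\subseteq C_{R_u(P_\lambda)}(K)$, so Proposition~\ref{positive proposition} pins down its shape: $w=\prod_{i=1}^{7}\epsilon_i(a)\prod_{i=8}^{14}\epsilon_i(b)\prod_{i=15}^{28}\epsilon_i(c)\prod_{i=29}^{35}\epsilon_i(a+b+c)\prod_{i=36}^{42}\epsilon_i(a_i)$ for some $a,b,c,a_i\in k$.

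Now $g=v(y)\,w\,v(x)^{-1}$ is a product of elements of $R_u(P_\lambda)$, so $g\in R_u(P_\lambda)$; together with $g\in M$ this gives $g\in R_u(P_\lambda)\cap M=R_u(P_\lambda(M))=Z(R_u(P_\lambda))$, whose roots are exactly the weight-$2$ roots $\{36,\dots,42\}$. I would extract the required constraints by passing to the abelian quotient $R_u(P_\lambda)/Z(R_u(P_\lambda))$, on which the weight-$1$ coefficients simply add and where the image of $g$ is trivial. Comparing coefficients on the orbits $O_1,O_8,O_{15},O_{29}$ then yields $x+y+a=0$, $b=0$, $c=0$, and $a+b+c=0$, so $a=b=c=0$ and hence $x=y$ (here $p=2$ is used, both to write $v(x)^{-1}=\prod_{i=1}^7\epsilon_i(x)$ and to deduce $x=y$ from $x+y=0$). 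Since $k^*$ is infinite, the family $\{\mathbf{m}(x)\}$ meets infinitely many distinct $P_\lambda(M)$-conjugacy classes inside $G\cdot\mathbf{m}\cap P_\lambda(M)^n$, which proves the lemma. I expect the main obstacle to be the middle step: correctly reducing the conjugator $w$ to the explicit form of Proposition~\ref{positive proposition}, and then carrying out the weight-$1$ bookkeeping in the abelianized unipotent radical so that the commutator corrections, which live harmlessly in the weight-$2$ center, do not obscure the coefficient comparison.
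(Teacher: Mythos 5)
Your proposal is correct and follows essentially the same route as the paper: reduce the conjugator to $C_{P_\lambda}(K_0)$ via the choice of $F$, apply Lemma~\ref{sizeofthecentralizerofLevi} and Proposition~\ref{positive proposition} to pin down its shape, and compare weight-$1$ coefficients to force $x=y$ in characteristic $2$. The only differences are cosmetic --- you explicitly check $\mathbf{m}(x)\in P_\lambda(M)^n$ and phrase the final coefficient comparison in the abelian quotient $R_u(P_\lambda)/Z(R_u(P_\lambda))$, whereas the paper invokes the uniqueness of the expression from Lemma~\ref{unipotent radical lemma} directly.
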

\begin{proof}
Fix $a'\in k^{*}$. By Lemma~\ref{sizeofthecentralizerofLevi}, we have 
$
C_{P_\lambda}(K_0) = C_{R_u(P_{\lambda})}(K_0) \subseteq  C_{R_u(P_{\lambda})}(K).
$
Then we obtain
\begin{equation}\label{controlling centralizer}
C_{P_\lambda}(v(a')K_0 v(a')^{-1}) = v(a')C_{P_\lambda}(K_0)v(a')^{-1}\subseteq v(a') C_{R_u(P_{\lambda})}(K)v(a')^{-1}.
\end{equation}
Choose $b'\in k^{*}$ such that $\bold{m}(a')$ is $P_\lambda(M)$-conjugate to $\bold{m}(b')$. Then there exists  $m\in P_\lambda(M)$ such that $m\cdot \bold{m}(b')=\bold{m}(a')$. By (\ref{controlling centralizer}), we have 
\begin{equation*}
m v(b') v(a')^{-1}\in C_{P_\lambda}(v(a') K_0 v(a')^{-1})\subseteq v(a') C_{R_u(P_{\lambda})}(K)v(a')^{-1}.
\end{equation*}
By Proposition~\ref{positive proposition}, we have 
\begin{alignat*}{2}
v(a') ^{-1} m v(b') &=\prod_{i=1}^{7}\epsilon_i(a)\prod_{i=8}^{14}\epsilon_i(b)\prod_{i=15}^{28}\epsilon_i(c)\prod_{i=29}^{35}
\epsilon_i(a+b+c)\prod_{i=36}^{42}\epsilon_i(a_i), \textup{ for some } a,b,c,a_i\in k. 
\end{alignat*}
This yields
\begin{equation*}
m=\prod_{i=1}^{7}\epsilon_i(a+a'+b')\prod_{i=8}^{14}\epsilon_i(b)\prod_{i=15}^{28}\epsilon_i(c)\prod_{i=29}^{35}
\epsilon_i(a+b+c)\prod_{i=36}^{42}\epsilon_i(b_i), \textup{ for some } a,b,c,b_i\in k. 
\end{equation*} 
But $m\in P_\lambda(M)$, so
$
a+a'+b' = 0, b=0, c=0, a+b+c=0. 
$
Hence we have
$
a'=b'. 
$
Thus we have shown that if $a'\ne b'$, then $\bold{m}(a')$ is not $P_\lambda(M)$-conjugate to $\bold{m}(b')$. So, in particular, $G\cdot \bold{m}\cap P_\lambda(M)^{n}$ is an infinite union of $P_\lambda(M)$-conjugacy classes.
\end{proof}
We need the next result~\cite[Lem.~4.4]{Lond}. We include the proof to make this paper self-contained.
\begin{lem}\label{Daniel}
$G\cdot \bold{m}\cap P_\lambda(M)^{n}$ is a finite union of $M$-conjugacy classes if and only if it is a finite union of $P_\lambda(M)$-conjugacy classes.
\end{lem}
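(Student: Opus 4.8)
The plan is to separate the two implications; one is immediate, and the other carries all the content. Throughout write $P := P_\lambda(M)$, a parabolic subgroup of $M$ (so that $M/P$ is a complete variety), and set $S := G\cdot\mathbf{m}\cap P^{n}$. First note that $S$ is stable under $P$-conjugation: if $\mathbf{s}\in S$ and $q\in P$ then $q\cdot\mathbf{s}$ lies in $G\cdot\mathbf{m}$, which is $G$-stable, and in $P^{n}$, which is $P$-stable, so $q\cdot\mathbf{s}\in S$. Thus $S$ really is a union of $P$-conjugacy classes, whereas it need not be $M$-stable; accordingly I read ``$S$ is a finite union of $M$-conjugacy classes'' as ``$S$ meets only finitely many $M$-conjugacy classes.''

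The easy implication is the ``if''. Suppose $S = P\cdot\mathbf{s}_1\cup\cdots\cup P\cdot\mathbf{s}_r$. Since $P\subseteq M$, every element of $S$ is $M$-conjugate to one of $\mathbf{s}_1,\dots,\mathbf{s}_r$, so $S$ meets at most $r$ distinct $M$-conjugacy classes.

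For the converse, assume $S$ meets only finitely many $M$-orbits (under simultaneous conjugation) $\mathcal{O}_1,\dots,\mathcal{O}_r$; each $\mathcal{O}_i$ meets $S\subseteq G\cdot\mathbf{m}$ and hence satisfies $\mathcal{O}_i\subseteq G\cdot\mathbf{m}$, which gives $S\cap\mathcal{O}_i = \mathcal{O}_i\cap P^{n}$. It therefore suffices to prove that for a single $M$-orbit $\mathcal{O}$ the intersection $\mathcal{O}\cap P^{n}$ is a finite union of $P$-conjugacy classes; summing over $i$ then finishes the proof. To treat one orbit, fix $\mathbf{v}_0\in\mathcal{O}\cap P^{n}$ and form the incidence variety
\[
\Gamma := \{\,(gP,\mathbf{v})\in M/P\times\mathcal{O} : g^{-1}\cdot\mathbf{v}\in P^{n}\,\},
\]
closed in $M/P\times\mathcal{O}$ and stable under the diagonal $M$-action. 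The fibre of the first projection over $eP$ is $\{eP\}\times(\mathcal{O}\cap P^{n})$ with the $P$-action matching $P$-conjugation, so by the standard correspondence for an $M$-variety lying over $M/P$ the $P$-conjugacy classes in $\mathcal{O}\cap P^{n}$ are in bijection with the $M$-orbits on $\Gamma$. Since $M/P$ is complete, the second projection $\Gamma\to\mathcal{O}$ is proper; its fibre over $\mathbf{v}_0$ is a closed, hence complete, subvariety $F$ of $M/P$, and unravelling the defining condition identifies $F$ with the set of parabolic subgroups of $M$ of the type of $P$ that contain the subgroup $H=\langle m_1,\dots,m_n\rangle$. The $M$-orbits on $\Gamma$ are in turn in bijection with the $C_M(\mathbf{m})=C_M(H)$-orbits on $F$.

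The main obstacle is exactly this final finiteness: I must show that $C_M(H)$ acts with only finitely many orbits on the complete variety $F$ of parabolics of fixed type containing $H$. This cannot follow from completeness alone---an arbitrary group action on a complete variety has infinitely many orbits in general---so the argument has to use the conjugation structure specifically. Here I would split $P$ through its Levi decomposition $P = L_\lambda(M)\cdot R_u(P_\lambda(M))$ and the canonical projection $c_\lambda$, reducing the count to a reductive contribution from $L_\lambda(M)$-conjugacy on the images in $L_\lambda(M)^{n}$ together with a unipotent contribution along $R_u(P_\lambda(M))$, and then run a Noetherian-induction/dimension argument on $F$ using the completeness of $M/P$. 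Granting this finiteness, the chain of bijections yields finitely many $P$-conjugacy classes in each $\mathcal{O}\cap P^{n}$, and summing over $\mathcal{O}_1,\dots,\mathcal{O}_r$ shows $S$ is a finite union of $P$-conjugacy classes, completing the equivalence.
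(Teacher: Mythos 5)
Your easy direction and the reduction to a single $M$-orbit are correct, but the argument stops exactly where the content of the lemma begins. You reformulate the hard implication as the assertion that $C_M(H)$ has finitely many orbits on the variety $F$ of parabolic subgroups of $M$ of the type of $P_\lambda(M)$ containing $H$, correctly observe that completeness of $F$ cannot give this, sketch a strategy (``Levi decomposition plus a Noetherian-induction/dimension argument''), and then proceed under the clause ``Granting this finiteness''. That finiteness is never established, and the sketch does not indicate how it would be: nothing in the proposal identifies the structural fact that actually forces it. So the nontrivial implication remains unproved; the incidence-variety formalism is a correct but contentless repackaging of the statement.

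The paper's proof supplies precisely the missing mechanism, and it is elementary. If $\mathbf{m_1},\mathbf{m_2}\in G\cdot\mathbf{m}\cap P_\lambda(M)^{n}$ satisfy $m\cdot\mathbf{m_1}=\mathbf{m_2}$, then $\mathbf{m_1}\in\bigl(P_\lambda(M)\cap Q\bigr)^{n}$ with $Q=m^{-1}P_\lambda(M)m$, and the intersection of these two parabolic subgroups of $M$ contains a maximal torus $S$ of $M$. Since $S$ and $m^{-1}Sm$ are maximal tori of $Q$, they are $Q$-conjugate; unwinding this yields $m^{-1}p^{-1}\in N_M(S)$ for some $p\in P_\lambda(M)$, and the finiteness of the Weyl group $N_M(S)/S$ (together with $S\subseteq P_\lambda(M)$) confines $m^{-1}$ to finitely many cosets $nP_\lambda(M)$. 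It is this torus-conjugacy argument --- common maximal torus of two parabolics, conjugacy of maximal tori, finiteness of the Weyl group --- that bounds the number of $P_\lambda(M)$-classes inside one $M$-class, and it is entirely absent from your proposal. If you wish to salvage your setup, the finiteness you need for $F$ is exactly what this argument proves, so you would be better off arguing directly as the paper does rather than passing through $\Gamma$ and $F$.
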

\begin{proof}
Pick $\bold{m_1}, \bold{m_2}\in G\cdot \bold{m}\cap P_\lambda(M)^{n}$ such that $\bold{m_1}$ and $\bold{m_2}$ are in the same $M$-conjugacy class of $G\cdot \bold{m}\cap P_\lambda(M)^{n}$. Then there exists $m\in M$ such that $m\cdot \bold{m_1} = \bold{m_2}$. Let $Q=m^{-1} P_\lambda(M) m$. Then we have $\bold{m_1} \in (P_\lambda(M)\cap Q)^{n}$. Now let $S$ be a maximal torus of $M$ contained in $P_\lambda(M)\cap Q$. Since $S$ and $m^{-1} S m$ are maximal tori of $Q$, they must be $Q$-conjugate. So there exists $q\in Q$ such that 
\begin{equation}\label{S-conjugacy}
q S q^{-1} = m^{-1} S m.
\end{equation}
Since $Q=m^{-1} P_\lambda(M) m$, there exists $p\in P_\lambda(M)$ such that $q= m^{-1} p m$. Then from (\ref{S-conjugacy}), we obtain 
$
p m S m^{-1} p^{-1} = S.
$ 
This implies
$
m^{-1} p^{-1} \in N_M(S).
$
Fix a finite set $N\subseteq N_M(S)$ of coset representatives for the Weyl group $W=N_M(S)/S$. Then we have
\begin{equation*}
m^{-1} p^{-1} = n s \textup{ for some } n\in N, s\in S.
\end{equation*}
So we obtain
$
\bold{m_1} = m^{-1} \cdot \bold{m_2} = (n s p) \cdot \bold{m_2}\in (n P_\lambda(M))\cdot \bold{m_2}.
$
Since $N$ is a finite set, this shows that a $M$-conjugacy class in $G\cdot \bold{m}\cap P_\lambda(M)^{n}$ is a finite union of $P_\lambda(M)$-conjugacy classes. The converse is obvious. 
\end{proof}
\begin{proof}[Proof of Theorem \ref{conjugacy-counterexample}]
By Lemma~\ref{Pclasses} and Lemma~\ref{Daniel}, we conclude that $G\cdot \bold{m}\cap P_\lambda(M)^{n}$ is an infinite union of $M$-conjugacy classes. Now it is evident that $G\cdot \bold{m}\cap M^{n}$ is an infinite union of $M$-conjugacy classes. 
\end{proof}
\section*{Acknowledgements}
This research was supported by a University of Canterbury Master's Scholarship and Marsden Grant UOC1009/UOA1021. The author would like to thank Benjamin Martin and G\"{u}nter Steinke for  helpful discussions. He is also grateful for detailed comments from J.P. Serre and an anonymous referee.
\newpage
\section*{Appendix}
\begin{table}[h!]
\scalebox{0.7}{
\begin{tabular}{llll}
\rootsG{1}{1}{0}{0}{1}{1}{1}{0}&\rootsG{2}{1}{1}{1}{1}{1}{0}{0}&\rootsG{3}{1}{0}{1}{1}{2}{1}{1}&\rootsG{4}{1}{0}{0}{1}{2}{2}{1}\\
&&&\\
\rootsG{5}{1}{1}{1}{2}{2}{1}{0}&\rootsG{6}{1}{0}{1}{1}{2}{2}{1}&\rootsG{7}{1}{1}{2}{2}{2}{1}{1}&\rootsG{8}{1}{0}{0}{0}{0}{0}{0}\\
&&&\\
\rootsG{9}{1}{0}{0}{0}{1}{0}{0}&\rootsG{10}{1}{0}{1}{1}{1}{1}{0}&\rootsG{11}{1}{0}{0}{1}{2}{1}{1}&\rootsG{12}{1}{1}{2}{2}{2}{2}{1}\\
&&&\\
\rootsG{13}{1}{1}{1}{2}{3}{2}{1}&\rootsG{14}{1}{1}{2}{3}{3}{2}{1}&\rootsG{15}{1}{0}{0}{1}{1}{0}{0}&\rootsG{16}{1}{0}{0}{0}{1}{1}{0}\\
&&&\\
\rootsG{17}{1}{0}{1}{1}{1}{0}{0}&\rootsG{18}{1}{0}{0}{0}{1}{1}{1}&\rootsG{19}{1}{0}{0}{1}{2}{1}{0}&\rootsG{20}{1}{1}{1}{1}{1}{1}{0}\\
&&&\\
\rootsG{21}{1}{0}{1}{1}{1}{1}{1}&\rootsG{22}{1}{1}{1}{1}{2}{1}{1}&\rootsG{23}{1}{1}{2}{2}{2}{1}{0}&\rootsG{24}{1}{1}{1}{2}{2}{1}{1}\\
&&&\\
\rootsG{25}{1}{0}{1}{2}{2}{2}{1}&\rootsG{26}{1}{1}{1}{2}{2}{2}{1}&\rootsG{27}{1}{0}{1}{2}{3}{2}{1}&\rootsG{28}{1}{1}{2}{2}{3}{2}{1}\\
&&&\\
\rootsG{29}{1}{0}{0}{1}{1}{1}{1}&\rootsG{30}{1}{0}{1}{1}{2}{1}{0}&\rootsG{31}{1}{1}{1}{1}{2}{1}{0}&\rootsG{32}{1}{1}{1}{1}{1}{1}{1}\\
&&&\\
\rootsG{33}{1}{0}{1}{2}{2}{1}{0}&\rootsG{34}{1}{0}{1}{2}{2}{1}{1}&\rootsG{35}{1}{1}{1}{1}{2}{2}{1}&\rootsG{36}{2}{0}{1}{2}{3}{2}{1}\\
&&&\\
\rootsG{37}{2}{1}{1}{2}{3}{2}{1}&\rootsG{38}{2}{1}{2}{2}{3}{2}{1}&\rootsG{39}{2}{1}{2}{3}{3}{2}{1}&\rootsG{40}{2}{1}{2}{3}{4}{2}{1}\\
&&&\\
\rootsG{41}{2}{1}{2}{3}{4}{3}{1}&\rootsG{42}{2}{1}{2}{3}{4}{3}{2}&\rootsG{43}{0}{1}{0}{0}{0}{0}{0}&\rootsG{44}{0}{0}{1}{0}{0}{0}{0}\\
&&&\\
\rootsG{45}{0}{0}{0}{1}{0}{0}{0}&\rootsG{46}{0}{0}{0}{0}{1}{0}{0}&\rootsG{47}{0}{0}{0}{0}{0}{1}{0}&\rootsG{48}{0}{0}{0}{0}{0}{0}{1}\\
&&&\\
\rootsG{49}{0}{1}{1}{0}{0}{0}{0}&\rootsG{50}{0}{0}{1}{1}{0}{0}{0}&\rootsG{51}{0}{0}{0}{1}{1}{0}{0}&\rootsG{52}{0}{0}{0}{0}{1}{1}{0}\\
&&&\\
\rootsG{53}{0}{0}{0}{0}{0}{1}{1}&\rootsG{54}{0}{1}{1}{1}{0}{0}{0}&\rootsG{55}{0}{0}{1}{1}{1}{0}{0}&\rootsG{56}{0}{0}{0}{1}{1}{1}{0}\\
&&&\\
\rootsG{57}{0}{0}{0}{0}{1}{1}{1}&\rootsG{58}{0}{1}{1}{1}{1}{0}{0}&\rootsG{59}{0}{0}{1}{1}{1}{1}{0}&\rootsG{60}{0}{0}{0}{1}{1}{1}{1}\\
&&&\\
\rootsG{61}{0}{1}{1}{1}{1}{1}{0}&\rootsG{62}{0}{0}{1}{1}{1}{1}{1}&\rootsG{63}{0}{1}{1}{1}{1}{1}{1}&\\
&&&\\
\end{tabular}
}
\caption{The set of positive roots of $G=E_7$}
\label{Roots of E_7}

\end{table}

\newpage
\bibliography{mybib}

\begin{thebibliography}{10}

\bibitem{Ben2}
M.~Bate, B.~Martin, and G.~R\"{o}hrle.
\newblock A geometric approach to complete reducibility.
\newblock {\em Inventiones Mathematicae}, 161:177--218, 2005.

\bibitem{Ben3}
M.~Bate, B.~Martin, G.~R\"{o}hrle, and R.~Tange.
\newblock Closed orbits and uniform {S}-instability in geometric invariant
  theory.
\newblock {\em Trans. Amer. Math. Soc.}
\newblock to appear.

\bibitem{Ben1}
M.~Bate, B.~Martin, G.~R\"{o}hrle, and R.~Tange.
\newblock Complete reducibility and separability.
\newblock {\em Trans. Amer. Math. Soc.}, 362(8):4283--4311, 2010.

\bibitem{Borel}
A.~Borel.
\newblock {\em {Linear Algebraic Groups}}.
\newblock Springer, Graduate Texts in Mathematics, second enlarged edition,
  1991.

\bibitem{Carter}
R.~Carter.
\newblock {\em {Simple Groups of Lie Type}}.
\newblock John Wiley \& Sons, 1972.

\bibitem{Freudenthal}
H.~Freudenthal and H.~de~Vries.
\newblock {\em {Linear Algebraic Groups}}.
\newblock Academic Press, New York and London, 1969.

\bibitem{Humphreys2}
J.~Humphreys.
\newblock {\em {Introduction to Lie Algebras and Representation Theory}}.
\newblock Springer, Graduate Texts in Mathematics, 1972.

\bibitem{Humphreys}
J.~Humphreys.
\newblock {\em {Linear Algebraic Groups}}.
\newblock Springer, Graduate Texts in Mathematics, 1991.

\bibitem{Liebeck-Seitz}
M.~Liebeck and G.~Seitz.
\newblock Reductive subgroups of exceptional algebraic groups.
\newblock {\em Mem. Amer. Math. Soc.}, 580, 1996.

\bibitem{Liebeck-Testerman}
M.~Liebeck and D.~Testerman.
\newblock Irreducible subgroups of algebraic groups.
\newblock {\em Q.J. Math}, 55:47--55, 2004.

\bibitem{Lond}
D.~Lond.
\newblock {\em On reductive subgroups of algebraic groups and a question of
  {K\"{u}lshammer}}.
\newblock PhD thesis, University of Canterbury, New Zealand, 2013.

\bibitem{Richardson2}
R.~Richardson.
\newblock Conjugacy classes in {Lie} algebras and algebraic groups.
\newblock {\em Ann. of Math.}, 86:1--15, 1967.

\bibitem{Richardson3}
R.~Richardson.
\newblock On orbits of algebraic groups and {Lie} groups.
\newblock {\em Bull. Austral. Math. Soc}, 25(1):1--28, 1982.

\bibitem{Richardson}
R.~Richardson.
\newblock Conjugacy classes of $n$-tuples in {Lie} algebras and algebraic
  groups.
\newblock {\em Duke Math. J.}, 57:1--35, 1988.

\bibitem{Serre-building}
J.P. Serre.
\newblock Compl\`{e}te r\'{e}ductibilit\'{e}. {S}\'{e}minaire bourbaki. vol
  2003/2004.
\newblock {\em Ast\'{e}risque}, 299:195--217, 2005.

\bibitem{Slodowy}
P.~Slodowy.
\newblock {\em Two notes on a finiteness problem in the representation theory
  of finite groups, \textup{Austral. Math. Soc. Lect. Ser. \textbf{9},
  Algebraic groups and Lie groups, 331-348}}.
\newblock Cambridge Univ. Press, Cambridge, 1997.

\bibitem{Springer}
T.~Springer.
\newblock {\em {Linear Algebraic Groups}}.
\newblock Birkh\"{a}user, Progress in Mathematics, second edition, 1998.

\bibitem{Vinberg}
E.~Vinberg.
\newblock On invariants of a set of matrices.
\newblock {\em J. Lie Theory}, 6:249--269, 1996.

\end{thebibliography}

\end{document}